\DeclareMathAlphabet{\mathpzc}{OT1}{pzc}{m}{it} 
\newtheorem{Thm}{Theorem}[section]
\newtheorem{Lem}{Lemma}[section]
\newtheorem{Prop}{Proposition}[section]
\newtheorem{Def}{Definition}[section]
\theoremstyle{definition} 
\newtheorem{Ass}{Assumptions}[section]
\theoremstyle{definition} 
\newtheorem*{probl}{Problem}
\theoremstyle{definition}
\newtheorem{Rem}{Remark}[section]
\theoremstyle{definition}
\newcommand{\eps}{\varepsilon} 
\def\thetabar{\overline{\theta}}
\newcommand{\A}{\mathsf{A}} 
\newcommand\thetaeps{\theta_\eps} 
\newcommand\chieps{\chi_\eps} 
\newcommand\thetaoeps{\theta_{0\eps}}
\newcommand\chioeps{\chi_{0\eps}}
\newcommand\vhat{\widehat{v}}
\newcommand\thetahat{\widehat{\theta}}
\newcommand\fhat{\widehat{f}}
\newcommand\Fhat{\widehat{F}}
\def\thetadiff{{\sl\Theta}}
\def\chidiff{{\mathcal X}}
\def\thetadiffhat{\widehat{{\sl\Theta}}}
\renewcommand\L[1]{L^#1(\Omega)} 
\renewcommand\H[1]{H^#1(\Omega)} 
\newcommand\sign{\text{sign}} 
\newcommand\vuoto{\varnothing} 
\newcommand\parti[1]{\mathscr{P}(#1)} 
\newcommand\enne{\mathbb{N}} 
\newcommand\erre{\mathbb{R}} 
\newcommand\intom{\int_\Omega} 
\newcommand\integr{\int_0^t} 
\newcommand\intdoppio{\int_0^t\!\!\int_\Omega} 
\def\fdual#1#2{{_{V'}\langle #1,#2\rangle_V}}
\newcommand\norma[2]{\Vert #1\Vert_{#2}} 
\newcommand\normaq[2]{\Vert #1\Vert_{#2}^2} 
\def\normaql#1#2{\left\Vert #1\right\Vert_{#2}^2}
\newcommand{\Xtilde}{\widetilde{X}}
\newcommand{\thetatilde}{\widetilde{\theta}}
\newcommand{\chitilde}{\widetilde{\chi}}
\newcommand\function{\longrightarrow} 
\newcommand\en{\mathbb{N}} 
\providecommand{\clint}[1]{\hspace{0.045ex}\left[#1\right]} 
\newcommand\norm[2]{\Vert #1\Vert_{#2}} 
\providecommand{\clsxint}[1]{\hspace{0.1ex}\left[#1\right[\hspace{0.15ex}} 
\providecommand{\cldxint}[1]{\hspace{0.15ex}\left]#1\right]} 
\providecommand{\opint}[1]{\hspace{0.15ex}\left]#1\right[\hspace{0.15ex}} 
\renewcommand{\L}{{\textsl{L}\hspace{0.17ex}}} 
\DeclareMathOperator{\de}{d \! \hspace{0.2ex}} 
\newcommand{\ftilde}{\widetilde{f}} 
\newcommand{\utilde}{\widetilde{u}} 
\newcommand{\Stop}{{\mathsf{S}}} 
\definecolor{blu}{rgb}{0.1,0.1,1}
\definecolor{green}{rgb}{0.0, 0.5, 0.0}
\definecolor{marr}{rgb}{0.63, 0.47, 0.35}
\begin{document}


\title[Stefan problem with phase relaxation]{A Convergence result for a \\ Stefan problem with phase relaxation}

\author{Vincenzo Recupero}

\dedicatory{ \vspace{3ex}
\large 
Dedicated to Pierluigi Colli on the occasion of his $65^{th}$ birthday
\vspace{2ex}}

\address{\textbf{Vincenzo Recupero} \\
        Dipartimento di Scienze Matematiche \\ 
        Politecnico di Torino \\
        Corso Duca degli Abruzzi 24 \\ 
        I-10129 Torino \\ 
        Italy. \newline
        {\rm E-mail address:}
        {\tt vincenzo.recupero@polito.it}}

\subjclass[2010]{35R35, 35K61, 80A22}
\keywords{Stefan problem, phase relaxation, nonlinear PDEs.}



\begin{abstract}
In this paper we consider the model of phase relaxation introduced in \cite{Vis01}, where an asymptotic analysis is performed toward an integral formulation of the Stefan problem when the relaxation parameter approaches zero. Assuming the natural physical assumption that the initial condition of the phase is constrained, but taking more general boundary conditions, we prove that the solution of this relaxed model converges in a stronger way to the solution of the classical weak Stefan problem.
\end{abstract}


\maketitle


\thispagestyle{empty}


\section{Introduction}

Modelling phase-transition phenomena in a substance attaining two phases (e.g. solid and liquid) in a bounded domain $\Omega$ of the space during the time interval $\clint{0,T}$, one is led to the the energy balance equation
\begin{equation}\label{en bal}
  \frac{\partial}{\partial t}(\theta+\chi) - \Delta\theta = g \qquad 
  \text{in $Q := \Omega \times \clint{0,T}$},
\end{equation}
where for simplicity we have normalized to $1$ all the physical constants. Here the unknowns 
$\theta = \theta(t,x)$ and $\chi = \chi(t,x)$ stand respectively for the temperature and the phase function: 
$(1-\chi)/2$ represents the solid concentration of the solid portion, $(1+\chi)/2$ is the concentration of the liquid portion, and $-1\leq\chi\leq1$, so that it is allowed the existence of mushy regions where the substance is a mixture of the solid and liquid parts (cf., e.g., \cite[p. 99]{Vis96}). In order to describe the evolution of the system, an equation relating $\theta$ and $\chi$ is needed. If $\theta=0$ is the equilibrium temperature at which the two phases can coexist, then we can take the \textsl{equilibrium condition of Stefan type} 
(see, e.g., \cite{Vis96} and the references therein)
\begin{equation}\label{Stef cond}
  \chi \in \sign(\theta) \qquad \text{in $Q$},
\end{equation}
where $\sign$ denotes the multivalued sign graph (i.e. $\sign(r) := -1$ if $r<0,$ $\sign(r) := [-1,1]$ if $r=0,$ 
$\sign(r):=1$ if $r>0$). Problem \eqref{en bal}--\eqref{Stef cond} is usually called \textsl{Stefan problem}. Notice that \eqref{Stef cond} could be written in the equivalent form 
\begin{equation}\label{Stef cond b}
  \sign^{-1}(\chi) \ni \theta \qquad \text{in $Q$},
\end{equation}
$\sign^{-1}$ being the inverse relation of the multivalued $\sign$ graph ($\sign^{-1}(r) := 0$ if 
$r \in \opint{-1,1}$, $\sign^{-1}(-1) := \cldxint{-\infty,0}$, $\sign^{-1}(1) := \clsxint{0,\infty}$).

If dynamic supercooling or superheating effects are to be taken into account, then condition 
\eqref{Stef cond b} is usually replaced by the following \textsl{relaxation dynamics} for the phase variable 
$\chi$ (cf., e.g., \cite{Vis85,Vis96} and their references)
\begin{equation}\label{Vis rel}
  \eps\frac{\partial \chi}{\partial t} + \sign^{-1}(\chi)\ni\theta \qquad \text{in $Q$},
\end{equation}
$\eps$ being a small kinetic positive parameter. Alternatively, the relaxation dynamics can also be modeled by the inclusion
\begin{equation}\label{new rel}
  \eps\frac{\partial \chi}{\partial t} + \chi\in\sign(\theta) \qquad \text{in $Q$},
\end{equation}
which is not equivalent to \eqref{Vis rel}.

The Stefan problem \eqref{en bal}--\eqref{Stef cond} and the Stefan problems with phase relaxation
\eqref{en bal}, \eqref{Vis rel} and \eqref{en bal}, \eqref{new rel} have been extensively studied: see, e.g., \cite{Dam77,DamKenSat94,Vis96} for \eqref{en bal}--\eqref{Stef cond}, 
\cite{Dam77,Vis85, ColGra93,DamKenSat94,Vis96} for \eqref{en bal}, \eqref{Vis rel}, and 
\cite{Vis01, Rec02a} for \eqref{en bal}, \eqref{new rel}. In particular in \cite{Vis85}, uniqueness and existence of \eqref{en bal}, \eqref{Vis rel}, coupled with suitable initial-boundary conditions, are proved in the framework 
of Sobolev spaces, and the solution of the relaxed problem is shown to converge, in a suitable topology,  to the solution of the problem \eqref{en bal}--\eqref{Stef cond} as $\eps\searrow0$. Problem \eqref{en bal}, \eqref{new rel} instead is dealt with in \cite{Rec02a} where existence, uniqueness, and asymptotic analysis of
the Stefan prblem are studied within the same Sobolev setting. Let us also observe that the Stefan problem with phase relaxation can also be studied taking into account a hyperbolic energy balance yielding a finite speed of propagation for the temperature field (see, e.g., \cite{Vis85,ShoWal87,Sh0Wal91,ColRec02,Rec02b,Rec04}).
 
Though models \eqref{en bal}, \eqref{Vis rel} and \eqref{en bal}, \eqref{new rel} are very natural from the analytic point of view, they have some modelling drawbacks. Indeed, as observed in \cite{Vis01}, in \eqref{Vis rel} the rate of the phase $\chi$ does not depend on $\chi$, because the term $\sign^{-1}$ only represents a constraint for the phase, and in \eqref{new rel} the phase depends only on the sign of the temperature $\theta$. One would expect instead that the rate of $\chi$ decays as $\chi$ approaches $1$ and that it also decays as $\theta$ tends to $0$. In order to overcome this 
modelling issue
in \cite{Vis01} the following relaxation dynamics is proposed:
\begin{equation}\label{phi-rel-mod}
  \eps\frac{\partial \chi}{\partial t} = \psi(\theta,\chi) \qquad \textrm{in }Q
\end{equation} 
for a suitable class of regular functions $\psi:\erre^2\rightarrow\erre$ which are increasing in 
$\theta$, decreasing in $\chi$, and such that $\psi(\theta,\chi) = 0$ if and only if 
$\chi \in \sign(\theta)$, or more generally 
\begin{equation}
  \text{$\psi(\theta,\chi) = 0\ $ if and only if $\ \chi \in \alpha(\theta)$},
\end{equation} 
where $\alpha$ is a general
linearly bounded maximal monotone operator in $\erre$, i.e. a continuous increasing graph in $\erre^2$ (cf., e.g, \cite{Bre73}, however in the next section we will provide all the precise definitions needed in the paper). An example, provided in \cite{Vis01}, is
\[
  \psi(\theta,\chi) = p(\theta^+)\frac{1-\chi}{2} + p(-\theta^-)\frac{1-\chi}{2}, 
\]
where $\theta^+=\max\{\theta,0\}$, $\theta^-=\max\{-\theta,0\}$, and $p : \erre \function \clint{-1,1}$ is a function such that $\pi_+=p(\theta^+)$ (respectively $\pi_-=-p-(\theta^-)$) represents the probability of melting a solid particle (respectively ``crystallizing a liquid particle'') in the unit time, with $p(r)r>0$ for every $r \neq 0$.

In \cite{Vis01} the model of phase relaxation \eqref{en bal}, \eqref{phi-rel-mod} is coupled with zero Dirichlet boundary conditions for the temperature, and it is shown that the
solution of the relaxed problem \eqref{en bal}, \eqref{phi-rel-mod} converges in suitable way to a solution of a rather weak formulation of the Stefan problem \eqref{en bal}, \eqref{Stef cond}.
To be more precise it is shown that as $\eps$ approaches zero along a suitable 
subsequence, the solution of the relaxed problem converges to a solution of a time-integral formulation of the Stefan problem \eqref{en bal}, \eqref{Stef cond},
and in general this weaker formulation has not a unique solution. The setting adopted in \cite{Vis01} makes the proofs nontrivial and $L^1$-techniques are needed.

The aim of our present paper is to perform the asymptotic analysis as $\eps$ approaches zero of the model of phase relaxation \eqref{en bal}, \eqref{phi-rel-mod} in the case of a \emph{bounded} graph 
$\alpha$ (which is \emph{physically very natural}, e.g. $\alpha = \sign$), but assuming \emph{more general} boundary conditions for the temperature $\theta$. In this way we are able to use $L^2$-techniques, we obtain a stronger convergence along the entire family $\eps$ (and not along a subsequence), and we find that the limit problem is actually the unique solution of
\eqref{en bal}, \eqref{Stef cond}.

To be more precise concerning our results, we assume that $\alpha$ is a bounded maximal monotone operator 
$\alpha : \erre\rightarrow\mathscr{P}(\erre)$  and we supply the system \eqref{en bal}, 
\eqref{phi-rel-mod} with the rather general initial-boundary conditions described as follows: letting 
$\{\Gamma_0,\Gamma_1\}$ be a partition of the boundary of $\Omega$ into two measurable sets, we take
\begin{alignat}{3}
  & \theta = \theta_D & \qquad & \text{on $\Gamma_0 \times \clint{0,T}$}, \label{b+i cond 1} \\
  & \partial_{\textbf{n}}\theta = -\theta_N & \qquad & \text{on $\Gamma_1 \times \clint{0,T}$}, 
      \label{b+i cond 2} \\
  & \theta(0,\cdot) + \chi(0,\cdot)  = \theta_0 + \chi_0 & \qquad &  \text{in $\Omega$}, 
      \label{b+i cond 3} 
\end{alignat}
where $\theta_D$, $\theta_N$, $\theta_0$, $\chi_0$ are given functions and $\textbf{n}$ is the outward unit vector normal to the boundary of $\Omega$. If we assume that $\theta_D$ is a sufficiently smooth function defined on the cylinder $Q$, that 
$\theta_N : \Gamma_1 \times \clint{0,T} \longrightarrow \erre$ is regular enough, and that there is a  function $u : Q \longrightarrow \erre$ such that $u = \Delta u$ in $Q$, $u = \theta_D$ on 
$\Gamma_0 \times \clint{0,T}$, and  $-\partial_{\textbf{n}} u = \theta_N$ 
on $\Gamma_1 \times \clint{0,T}$ and we set 
$\thetabar_0 := \theta_0 - u(\cdot,0)$. 
 Hence we rewrite all the equations in the new unknown 
$\thetabar := \theta - u$ so that problem
\eqref{en bal}, \eqref{phi-rel-mod}, 
\eqref{b+i cond 1}--\eqref{b+i cond 3} reads, writing again $\theta$ instead of $\thetabar$ for simplicity,
\begin{alignat}{3}
  & \frac{\partial}{\partial t}(\theta+\chi) - \Delta\theta = 
     g - \frac{\partial u}{\partial t} + \Delta u &\qquad& \text{in $Q$}, \label{our system 1} \\
  & \eps\frac{\partial \chi}{\partial t} = \psi(\theta+u,\chi) & \qquad& \text{in $Q$},
      \label{our system 2} \\
  & \theta = 0 & \qquad & \text{on $\Gamma_0 \times \clint{0,T}$,} \label{our system 3a} \\ 
 &  \partial_{\textbf{n}}\theta=0 & \qquad & \text{on $\Gamma_1 \times \clint{0,T}$},
      \label{our system 3b}\\
 & \theta(0,\cdot) + \chi(0,\cdot)  = \theta_0 + \chi_0 & \qquad &  \text{in $\Omega$}.
  \label{our system 4}
\end{alignat}
This formulation has the advantage that the boundary conditions for $\theta$ are homogeneus and the wider generality is incorporated in the $u$-terms in the right-hand side of the balance equation and in the non-linearity $\psi$. As described above, by means of $L^2$-techinques, we will prove 
that the only solution of \eqref{our system 1}--\eqref{our system 4} converges to the solution of \eqref{our system 1}, \eqref{our system 3a}--\eqref{our system 4} coupled with
\[
  \chi \in \alpha(\theta + u) \qquad  \text{in $Q$}
\]
as $\eps \searrow 0$ 
(not only along a suitable subsequence). 

The plan of the paper is the following. In Section \ref{S:results} we list the precise assumptions on the data of the problem and we state our main results. In Section \ref{S:eps-pb} we analyze the relaxed problem \eqref{our system 1}--\eqref{our system 4}. In the final Section \ref{S:limit} we perform the asymptotic analysis as the relaxation parameter $\eps$ goes to zero.


\section{Main results}\label{S:results}

In this section we give the variational formulation of the problems presented in the Introduction and we state our main results. 

The set of integers greater than or equal to $1$ will be denoted by $\enne$. Given  
$p \in \clsxint{1,\infty}$, a measure space $D$, and a real Banach space $B$, then the space of $B$-valued functions on $D$ which are $p$-integrable will be denoted by $\L^p(D; B)$; the vector space of essentially bounded $B$-valued functions on $D$ is denoted by $\L^\infty(D; B)$. These spaces will be endowed with their natural norms defined by 
$\norm{v}{L^p(D;B)} := \left(\int_D\norma{v(x)}{B}^p \de x\right)^{1/p}$
if $p \in \clsxint{1,\infty}$, and by
$\norm{v}{L^\infty(D;B)} := \inf_w \sup_{x \in D}\norma{w(x)}{B}$, where the infimum is taken over all bounded
$\mu$-measurable functions $w$ equal to $v$ $\mu$-almost everywhere, $\mu$ being the measure on $D$. 
If $p=2$ and $B = E$ is a Hilbert space then this norm is induced by the inner product $(v_1,v_2)_{L^2(D;E)} = \int_D (v_1(x), v_2(x))_E \de x$, where $(\cdot,\cdot,)_E$ is the inner product in $E$.
For the theory of integration of vector valued functions we refer, e.g., to \cite[Chapter VI]{Lan93}. 
We will simply set $L^p(D) := L^p(D;\erre)$ for $p \in \clint{1,\infty}$.
If $n \in \enne$, the
$n$-dimensional Lebesgue measure of a set $D \subseteq \erre^n$ will be denoted by $|D|$. 
In the following the locutions ``almost every" and ``almost everywhere" (``a.e.") 
will always refer to the Lebesgue measure. 
If $D \subseteq \erre^n$ is open, we will make use of the Sobolev space
$H^1(D) := \{v \in L^2(D)\ :\ \partial_i v \in L^2(D),\ i=1,\ldots,n\}$ where $\partial_i v$ denotes the partial derivative of $v$ with respect to the $i$-th variable in the sense of distributions (cf., e.g., \cite{Ada75}). 
The symbol $\nabla$ will denote the distributional gradient operator so that $H^1(D)$ is a real Hilbert space if it is endowed with the inner product
\begin{equation}\label{inn-prod H1}
  (v_1,v_2)_{H^1(D)} := \int_D v_1(x) v_2(x) + \int_D \nabla v_1(x) \cdot \nabla v_2(x), 
  \qquad v_1, v_2 \in H^{1}(D),
\end{equation}
which induces the usual norm $\norm{\cdot}{H^1(D)}$.
If $\partial D$ is of Lipschitz class and if $\Gamma_0$ is open in $\partial D$, then the restriction operator $C^\infty(\overline{D}) \function C(\Gamma_0) : v \longmapsto v|_{\Gamma_0}$ can be uniquely continuosly extended to a linear continuous operator $\gamma_{\Gamma_0} : H^1(D) \function L^2(\Gamma_0)$, where $\Gamma_0$ is endowed with the $(n-1)$-dimensional surface (Hausdorff) measure (see, e.g., \cite{LioMag72, Gri85}). The notation $v|_{\Gamma_0} := \gamma_{\Gamma_0}(v)$ is commonly used for a function $v \in H^1(D)$.
If $a, b \in \erre$, $a < b$, we set $L^p(a,b;B) := L^p(\opint{a,b};B)$ for $p \in \clint{1,\infty}$ and, if $B = E$ is a Hilbert space, we define 
$H^1(a,b;E) := \{f \in L^2(a,b;E)\ :\ f' \in L^2(a,b;E)\}$, where $g'$ denotes
the distributional derivative of a function $g : \opint{a,b} \function E$, and the Hilbertian norm defined by 
$
  \norm{f}{H^1(a,b;E)}^2 := \norm{f}{L^2(a,b;E)}^2 + \norm{f'}{L^2(a,b;E)}^2 
$
is used. For the main properties of the Sobolev space $H^1(a,b;E)$ we refer, e.g., to 
\cite[Appendix]{Bre73}. Now we can present our first set of assumptions.

\vspace{1ex}

\begin{Ass}\label{H} 
The following conditions will be used in the paper.
\begin{itemize}
\item[(H1)] 
  $\Omega \subseteq \erre^n$ is a bounded open connected set with  
  Lipschitz boundary $\Gamma := \partial \Omega$. $\Gamma_0$ and $\Gamma_1$ are open subsets of 
  $\Gamma$ such that 
  $\Gamma_0 \cap \Gamma_1 = \vuoto$. If $\overline\Gamma_0$ and $\overline\Gamma_1$ denote the       
  closures of $\Gamma_0$ and $\Gamma_1$ in $\Gamma$, then we assume that 
  $\overline\Gamma_0 \cup \overline\Gamma_1 = \Gamma$ and that
  $\overline\Gamma_0 \cap \overline\Gamma_1$ is of Lipschitz class. We define
  \begin{align}
& H := L^2(\Omega), \\
& V := H^1_{\Gamma_0}(\Omega) :=  \{v \in \H1\ :\ v|_{\Gamma_0} = 0\},
\end{align}
 endowed with their usual inner products, in particular $V$ is endowed with the inner product induced by \eqref {inn-prod H1}.  If $V'$ denotes the topological dual space of $V$, we define the linear continuous operator $\A : V \function V'$ by
\begin{equation}
  \fdual {\A v_1}{v_2} := \intom \nabla v_1 \cdot \nabla v_2,
  \quad v_1,v_2 \in V,
\end{equation}
where $\fdual{\cdot}{\cdot}$ denotes the duality between $V'$ and $V$. The final time of the evolution will be denoted by $T > 0$ and we set $Q := \Omega \times ]0,T[$.
\vspace{1.5ex}
\item[(H2)]
We are given
\begin{equation}\label{phi lip}
   \text{$\psi : \erre^2 \function \erre\ $ Lipschitz continuous with Lipschitz constant $L$.} 
\end{equation}  
\item[(H3)] 
For every $\eps > 0$ we are given
  \begin{equation}
  f_\eps \in L^1(0,T;H) + L^2(0,T;V'), \qquad 
   u_\eps \in L^2(Q), 
  \end{equation}
  where we recall that 
  \[L^1(0,T;H) + L^2(0,T;V') := \{h = h_1+h_2\ :\ h_1 \in L^1(0,T;H),\  h_2 \in L^2(0,T;V')\}\]
  endowed with the norm
  \[
    \norm{h}{L^1(0,T;H) + L^2(0,T;V')} := 
    \inf_{h = h_1 + h_2} \norm{h_1}{L^1(0,T;H)} + \norm{h_2}{L^2(0,T;V')},
  \]
  where the infimum is taken over all the decompositions $h = h_1 + h_2$ with $h_1 \in L^1(0,T;H)$, 
  $h_2 \in L^2(0,T;V')$.
\vspace{1.5ex}  
\item[(H4)] 
For every $\eps > 0$ we are given
\begin{equation}
  \theta_{0\eps} \in L^2(\Omega), \qquad \chi_{0\eps} \in L^2(\Omega). 
  \end{equation} 
\end{itemize}
\end{Ass}

\vspace{1ex}

\begin{Rem}
Let us observe that in assumptions  
(H1), (H2) we do not require that the $(n-1)$-dimensional Hausdorff measure of $\Gamma_0$ is strictly positive.
\end{Rem}

Let us recall that $V \subset H \subset V'$ with dense and compact embeddings, where $V'$ is endowed with its dual norm induced by $V$ and we have identified $H$ with its dual, thus 
\[
  \fdual{e}{v} = (e,v)_H \qquad \forall e \in H, \quad v \in V.
\]

We will also need the following second set of assumptions:

\vspace{1ex}

\begin{Ass}\label{A}
The following conditions will be used in the paper.
\begin{itemize}
\item[(A1)] 
  $\alpha : \erre \rightarrow \parti{\erre}$ is maximal monotone, i.e. if 
  $D(\alpha) := \{r \in \erre\ :\ \alpha(r) \neq \vuoto\}$ then 
  \[
    (s_1 - s_2)(r_1 - r_2) \ge 0 \qquad \forall  r_1, r_2 \in D(\alpha), \quad s_1 \in \alpha(r_1), s_2 \in \alpha(r_2),
  \] 
  and 
  \[
    (\sigma - s)(\rho - r) \ge 0, \quad s \in \alpha(r), \quad r \in D(\alpha) \quad \Longrightarrow \quad 
    \sigma \in \alpha(\rho). 
  \]
  We also assume that $\alpha$ is ``bounded'', i.e. there is a constant $M > 0$ such that 
  \begin{equation}
  |s| \le M \qquad  \forall r \in D(\alpha), \quad \forall s \in \alpha(r).
  \end{equation}
\item[(A2)]
  $\psi : \erre^2 \function \erre$ is the Lipschitz continuous function given in (H2) of Assumptions  
  \ref{H} satisfying \eqref{phi lip} and the following monotonicity condition:
  \begin{equation}\label{phi increasing in theta}
  \big[\psi(\tau_1, \chi) - \psi(\tau_2, \chi)\big](\tau_1-\tau_2) \ge 0 \qquad
  \forall \tau_1, \tau_2, \chi \in \erre,
\end{equation}
\begin{equation}\label{theta decreasing in chi}
    \big[\psi(\tau, \chi_1) - \psi(\tau, \chi_2)\big](\chi_1-\chi_2) \le 0 \qquad
    \forall \chi_1, \chi_2, \tau \in \erre,
\end{equation}
i.e. $\psi(\cdot, \chi)$ is increasing for every $\chi \in \erre$ and $\psi(\tau, \cdot)$ is decreasing for every $\tau \in \erre$.
\vspace{1.5ex}
\item[(A3)] We assume the following ``compatibility'' condition between $\alpha$ and $\psi$:
  \begin{equation}
    \psi(\tau,\chi) = 0 \quad \Longleftrightarrow \quad \chi \in \alpha(\tau) 
    \qquad \forall (\tau,\chi) \in \erre^2. \label{psi(r,s)=0}
  \end{equation}
\item[(A4)] 
We are given
  \begin{equation}
  f\in BV(0,T;L^1(\Omega)), \qquad 
  u \in L^2(Q), 
  \end{equation}
\item[(A5)]
We are given
\begin{equation}
  \theta_{0} \in L^2(\Omega), \qquad \chi_{0} \in L^\infty(\Omega),
  \end{equation} 
  such that  
\begin{equation}\label{ch in sign(th)}
   \chi_0(x) \in \alpha\big(\theta_0(x) + u(0,x)\big) \qquad \text{for a.e. $x \in \Omega$}.
 \end{equation} 
\end{itemize}
\end{Ass}

\vspace{1ex}

\begin{Rem}
Let us observe that \eqref{ch in sign(th)} is equivalent to condition (3.3) in \cite{Vis01}.
\end{Rem}

Let us recall that under condition (H1) of Assumptions  
\ref{H} and conditions (A1), (A4), (A5) of Assumptions  
\ref{A}, it is well-know that the Stefan problem admits a unique solution, i.e. there exists a unique pair 
$(\theta,\chi) : Q \function \erre^2$ such that
\begin{alignat}{3}
  &\theta\in L^2(0,T;V) \cap H^1(0,T;H), \label{wStef th} \\
  & \chi\in L^\infty(Q), \label{wStef ch} \\
  & \theta + \chi \in H^1(0,T;V') \label{wStef th+ch} \\
  & (\theta + \chi)'(t) + \A\theta(t) = f(t)  & \qquad &     
      \text{in $V'$, for a.e. $t \in \opint{0,T}$,} \label{wStef pb 1} \\ 
  & \chi(t,x) \in \alpha\big(\theta(t,x) + u(t,x)\big) & \qquad & \text{for a.e. $(t,x)\in Q$},  
       \label{wStef pb 2} \\
  &(\theta+\chi)(0)=\theta_0+\chi_0 & \qquad& \text{in $V'$}. \label{wStef pb 3}
\end{alignat}
For a proof we refer, for instance, to \cite{ColGra93, DamKenSat94,Vis96}.

Now we state the weak formulation of the model of phase relaxation 
\eqref{our system 1}--\eqref{our system 4}.

\begin{probl}[\textbf{P$_{\eps}$}] Assume that $\eps > 0$ and that Assumptions  
\ref{H} are satisfied.
Find a pair of functions $(\thetaeps,\chieps) : Q \function \erre^2$ satisfying the following conditions:
\begin{alignat}{3}
  & \thetaeps \in L^2(0,T;V) \cap H^1(0,T;V'),  \label{pb-eps theta} \\
  & \chieps \in H^1(0,T;H), \label{pb-eps chi} \\
  & \thetaeps'(t) + \chieps'(t) + \A \thetaeps(t) = f_\eps(t) & \qquad & 
     \textrm{in $V'$,\ for a.e. $t \in \opint{0,T}$}, \label{pb-eps eq.energ.} \\
  & \eps \chieps'(t,x) = \psi\big(\thetaeps(t,x) + u_\eps(t,x),\chieps(t,x)\big) & \qquad & 
     \text{for a.e. $(t,x) \in Q$}, 
      \label{pb-eps eq.fase} \\
  &  \thetaeps(0) = \thetaoeps, & \quad & \textrm{a.e. in}\ \Omega, \label{pb-eps c.i.theta} \\
  &  \chieps(0) = \chioeps , & \quad & \textrm{a.e. in}\ \Omega. \label{pb-eps c.i.chi}
\end{alignat}
\end{probl}

\vspace{1ex}

Let us now introduce a general notation which will hold throughout the paper.

\begin{Def}\label{D:cappello}
For a real Banach space $B$, and for a function $v \in L^1(0,T;B)$ we define 
${\widehat v} : [0,T] \longrightarrow B$ by setting
\begin{equation}\label{cappello}
  {\widehat v}(t) := \integr v(s) \de s, \qquad  t \in [0,T].
\end{equation}
\end{Def}

\vspace{1ex}

We also state the following {\sl Baiocchi-Duvaut-Fr\'emond formulation} of the classical Stefan problem (cf. \cite{Bai71,Duv73,Fre74}). 

\begin{probl}[\textbf{P}]
Find a pair of functions $(\theta,\chi) : Q \function \erre^2$ satisfying the following conditions:
\begin{alignat}{3}
  &\thetahat\in L^\infty(0,T;V)\cap H^1(0,T;H), \label{P pb 1} \\
  & \chi\in L^\infty(Q), \\
  & \theta(t) + \chi(t) + A\thetahat(t) = \fhat(t) + \theta_0 + \chi_0 & \qquad & 
      \text{in $V'$, for a.e. $t \in \opint{0,T}$,}
   \label{P pb 2} \\ 
  & \chi(t,x) \in \alpha\big(\theta(t,x) + u(t,x)\big) & \qquad & \text{for a.e. $(t,x)\in Q$}. \label{P pb 3}
\end{alignat}
A pair $(\theta,\chi)$ satisfying \eqref{P pb 1}--\eqref{P pb 3} is also called a 
\textsl{solution of the Stefan problem in the sense of Baiocchi-Duvaut-Fr\'emond}.
\end{probl}

\vspace{1ex}

Now we state the main results of this paper.

\begin{Thm}\label{E!Peps}
Assume that $\eps > 0$ and that Assumptions  
\ref{H} hold. Then Problem 
\emph{(\textbf{P}$_{\eps}$)} admits a unique solution. Moreover it is well-posed in the sense specified by Proposition 
\ref{cont-dep Peps} below.
\end{Thm}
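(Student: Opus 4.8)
The plan is to recast Problem (\textbf{P}$_{\eps}$) as a fixed-point equation for $\thetaeps$ and to solve it by the contraction principle on a short time interval, then to patch finitely many such intervals together to cover $\clint{0,T}$.

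First I would treat the two equations \eqref{pb-eps eq.energ.} and \eqref{pb-eps eq.fase} separately. For the phase equation: given $v \in L^2(Q)$, the Cauchy problem $\eps\chi' = \psi(v + u_\eps,\chi)$, $\chi(0) = \chioeps$, read pointwise in $x \in \Omega$, is a Carath\'eodory ordinary differential equation with right-hand side globally Lipschitz in $\chi$ (constant $L/\eps$) and of at most linear growth, since $\psi$ is Lipschitz; hence for a.e.\ $x$ it has a unique absolutely continuous global solution, and the elementary a priori bound — together with $v, u_\eps \in L^2(Q)$ and $\chioeps \in L^2(\Omega)$ — produces a function $\chi =: \mathcal{T}_\eps(v) \in H^1(0,T;H) \cap C(\clint{0,T};H)$. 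A Gronwall estimate, again pointwise in $x$, shows that $v \mapsto \mathcal{T}_\eps(v)$ is Lipschitz from $C(\clint{0,T'};H)$ into itself and $v \mapsto (\mathcal{T}_\eps(v))'$ is Lipschitz from $C(\clint{0,T'};H)$ into $L^2(0,T';H)$, with Lipschitz constants bounded by $C(L,\eps,T)\sqrt{T'}$ for $T' \le T$; in particular these constants do not depend on $\thetaoeps,\chioeps$. For the energy equation: given $g \in L^1(0,T;H) + L^2(0,T;V')$, the linear parabolic Cauchy problem $\theta' + \A\theta = g$, $\theta(0) = \thetaoeps$ has a unique solution $\theta =: \mathcal{S}_\eps(g) \in L^2(0,T;V) \cap C(\clint{0,T};H)$ by the classical variational theory of linear evolution equations in the triple $V \subset H \subset V'$ (see, e.g., \cite{Bre73, LioMag72}); moreover $\mathcal{S}_\eps(g_1) - \mathcal{S}_\eps(g_2)$ solves the same problem with datum $g_1 - g_2$ and zero initial value, so that $\norm{\mathcal{S}_\eps(g_1) - \mathcal{S}_\eps(g_2)}{C(\clint{0,T'};H)} \le \norm{g_1 - g_2}{L^1(0,T';H)}$ uniformly in $T' \le T$.

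Next I would introduce the map $\Phi_\eps(\theta) := \mathcal{S}_\eps\big(f_\eps - (\mathcal{T}_\eps(\theta))'\big)$ (note that $(\mathcal{T}_\eps(\theta))' = \tfrac{1}{\eps}\psi(\theta + u_\eps, \mathcal{T}_\eps(\theta))$ by \eqref{pb-eps eq.fase}), and observe that a pair $(\thetaeps,\chieps)$ solves (\textbf{P}$_{\eps}$) on $\clint{0,T'}$ if and only if $\thetaeps$ is a fixed point of $\Phi_\eps$ in $C(\clint{0,T'};H)$ and $\chieps = \mathcal{T}_\eps(\thetaeps)$. Combining the Lipschitz bounds above, $\norm{\Phi_\eps(\theta_1) - \Phi_\eps(\theta_2)}{C(\clint{0,T'};H)} \le C(L,\eps,T)\,T' \norm{\theta_1 - \theta_2}{C(\clint{0,T'};H)}$, so choosing $T'$ small enough — depending only on $L$, $\eps$ and $T$, not on the data — makes $\Phi_\eps$ a strict contraction of $C(\clint{0,T'};H)$ and yields a unique local solution on $\clint{0,T'}$. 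Since the contraction length $T'$ is independent of the initial data and the solution just built has $\thetaeps(T'), \chieps(T') \in H$, the same construction can be restarted on $\clint{T',2T'}$, and after finitely many identical steps one obtains a solution on all of $\clint{0,T}$; uniqueness on $\clint{0,T}$ follows by the same patching, two solutions being forced to coincide on $\clint{0,T'}$, hence at $T'$, hence on $\clint{T',2T'}$, and so on. The regularity asserted in \eqref{pb-eps theta}--\eqref{pb-eps chi} is then read off directly from \eqref{pb-eps eq.energ.}--\eqref{pb-eps eq.fase}.

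The core of the argument is the choice of a functional framework — here $C(\clint{0,T'};H)$ — in which both solution operators $\mathcal{S}_\eps$ (for the distributed parabolic equation) and $\mathcal{T}_\eps$ (for \eqref{pb-eps eq.fase}, which is an ODE in time at almost every $x$) are simultaneously well defined and Lipschitz, and in which their composition carries a factor that is small for short times. A secondary technical point is the weakness of hypotheses (H3)--(H4): the merely $L^1$-in-time component of $f_\eps$ obstructs a naive global energy estimate, which is precisely why I would favour the local-contraction-plus-finite-iteration scheme, whose contraction radius depends only on $L$, $\eps$ and $T$. An alternative would be to run a Faedo--Galerkin (or Yosida) approximation and pass to the limit using the a priori estimates obtained by testing \eqref{pb-eps eq.energ.} with $\thetaeps$ and \eqref{pb-eps eq.fase} with $\chieps$ and invoking Gronwall's lemma, then deducing uniqueness from an energy estimate on the difference of two solutions; but with $\psi$ only Lipschitz the contraction route is shorter and produces uniqueness at one stroke.
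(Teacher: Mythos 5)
Your argument is correct, but it is organized differently from the paper's. The paper also uses a Banach fixed point, but it iterates on the \emph{phase} rather than on the temperature: it works in $\Sigma=\{h\in H^1(0,T;H):h(0)=\chioeps\}$, and for a given $X\in\Sigma$ it solves the parabolic problem with source $f_\eps-X'$ and then defines $\Stop(X)$ by the \emph{explicit} integral $\chi(t,x)=\chioeps(x)+\eps^{-1}\int_0^t\psi(\theta_X+u_\eps,X)\,\de s$, i.e.\ $\psi$ is evaluated at the frozen $X$, so no ODE ever has to be solved; the estimate $\normaq{\Stop(X_1)-\Stop(X_2)}{H^1(0,t;H)}\le C\int_0^t\normaq{X_1-X_2}{H^1(0,s;H)}\de s$ then gives $\norma{\Stop^n(X_1)-\Stop^n(X_2)}{H^1(0,t;H)}\le ((CT)^n/n!)^{1/2}\norma{X_1-X_2}{H^1(0,t;H)}$, so some iterate is a strict contraction on all of $\clint{0,T}$ at once and no patching is needed. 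You instead resolve the Carath\'eodory ODE $\eps\chi'=\psi(v+u_\eps,\chi)$ exactly for each fixed $v$ and contract in $\theta$ on a short interval, then continue. Both routes are sound; what the paper's version buys is the avoidance of two technical points that your scheme must (and can) handle: (i) the joint measurability in $(t,x)$ and the $H^1(0,T;H)$ membership of the family of pointwise-in-$x$ ODE solutions (standard via Picard iteration, but it deserves a line, whereas an explicit integral of a Carath\'eodory integrand is automatically fine), and (ii) the gluing of solutions across the junctions $kT'$, together with the remark that the contraction length $T'$ depends only on $L$, $\eps$, $T$ --- which is true here precisely because $\psi$ is globally Lipschitz, and is the same structural fact that powers the factorial estimate in the paper. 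What your version buys is that the forward map is the genuine solution operator of the phase equation, so the equivalence ``fixed point $\Leftrightarrow$ solution of (\textbf{P}$_\eps$)'' and hence uniqueness are immediate from ODE and parabolic uniqueness. One caveat you share with the paper: with $f_\eps\in L^1(0,T;H)+L^2(0,T;V')$ the regularity $\thetaeps\in H^1(0,T;V')$ in \eqref{pb-eps theta} is not quite ``read off directly'' (the $L^1$-in-time component of $f_\eps$ only yields $\thetaeps'\in L^1(0,T;H)+L^2(0,T;V')$), so this point rests on the cited parabolic well-posedness result in either treatment.
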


\begin{Thm}\label{convergence thm}
If Assumptions \ref{H} and \ref{A} are satisfied, then there exists a unique solution 
$(\theta,\chi)$ of Problem {\bf (P)}, and $(\thetahat,\theta,\chi)$ is the weak-star limit in 
$L^\infty(0,T;V)\times L^2(0,T;H)\times L^\infty(0,T;H)$ of the sequence 
$((\thetahat_\eps,\thetaeps,\chieps))_\eps$ as $\eps\searrow0$, where
$(\thetaeps,\chieps)$ is the solution of {\bf (P$_{\eps}$)} and it is assumed that 
$\chi_{0\eps} \in L^\infty(\Omega)$ for every $\eps > 0$ and
\begin{alignat}{3}
  & f_\eps \to f  & \qquad & \text{in $L^1(0,T;L^1(\Omega))$}, \label{convhp1}\\ 
  & u_\eps \to u  & \qquad & \text{in $L^2(Q)$}, \\
  & \theta_{0\eps}  \to \theta_0 & \qquad & 
  \text{in $H$}, \\
  & \chi_{0\eps}  \to \chi_0 & \qquad & 
  \text{in $L^\infty(\Omega)$} \label{cheps0->ch0}
\end{alignat}
as $\eps \searrow 0$. Moreover $(\theta,\chi)$ is also the unique solution of the Stefan problem
\eqref{wStef th}--\eqref{wStef pb 3}.
\end{Thm}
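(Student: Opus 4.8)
### Proof strategy for Theorem \ref{convergence thm}

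The plan is to derive a priori estimates on $(\thetaeps,\chieps)$ that are uniform in $\eps$, extract a weak-star convergent subfamily, pass to the limit in the equations to obtain a solution of Problem \textbf{(P)} (equivalently, of \eqref{wStef th}--\eqref{wStef pb 3}), and then use the known uniqueness of the Stefan problem to upgrade convergence of the subfamily to convergence of the entire family. The starting point is Theorem \ref{E!Peps}, which gives, for each $\eps>0$, the unique solution of \textbf{(P$_\eps$)}.

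First I would establish the uniform bounds. Testing \eqref{pb-eps eq.energ.} by $\thetaeps(t)$ and integrating in time gives control of $\thetaeps$ in $L^2(0,T;V)$ together with a term $\int_0^t(\chieps'(s),\thetaeps(s))_H\,ds$. Here is where the boundedness of $\alpha$ enters crucially (this is the point that fails in \cite{Vis01} and forces $L^1$-techniques there): from \eqref{pb-eps eq.fase}, the sign/monotonicity structure \eqref{phi increasing in theta}--\eqref{theta decreasing in chi} and the compatibility \eqref{psi(r,s)=0}, one shows $\chieps$ stays in the $\eps$-independent $L^\infty$-ball of radius $\max(M,\norm{\chi_{0\eps}}{L^\infty})$; combined with $\eps\chieps'=\psi(\thetaeps+u_\eps,\chieps)$ and the Lipschitz bound \eqref{phi lip}, this yields $\sqrt{\eps}\,\norm{\chieps'}{L^2(Q)}$ bounded and, more importantly, $\norm{\chieps}{L^\infty(Q)}$ bounded uniformly. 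A second estimate — formally testing the time-differentiated energy equation by $\thetaeps'$, or more rigorously a difference-quotient/Moreau--Yosida argument using $f\in BV(0,T;L^1(\Omega))$ and the monotone structure — gives $\thetahat_\eps$ bounded in $L^\infty(0,T;V)$ and $\thetaeps$ bounded in $L^2(0,T;H)$ (indeed one expects $\theta_\eps$ bounded in $L^\infty(0,T;H)$ and $\thetahat_\eps$ in $H^1(0,T;V)$, matching \eqref{P pb 1}). Then $\thetaeps'+\chieps'=f_\eps-\A\thetaeps$ is bounded in $L^1(0,T;H)+L^2(0,T;V')$, so $\widehat{(\thetaeps'+\chieps')}=\thetaeps+\chieps-\thetaoeps-\chioeps$ is bounded, consistent with the integrated formulation \eqref{P pb 2}.

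Next, from these bounds I would extract $\eps_k\searrow0$ and limits $\theta\in L^2(0,T;H)$, $\chi\in L^\infty(Q)$, $\widehat\theta\in L^\infty(0,T;V)$ with $\thetaeps\convergedeb\theta$, $\chieps\stackrel{*}{\convergedeb}\chi$, $\thetahat_{\eps_k}\stackrel{*}{\convergedeb}\thetahat$ in the respective spaces; since $\eps_k\chi_{\eps_k}'\to0$ in $L^2(Q)$, the relaxation term disappears and passing to the limit in the time-integrated version of \eqref{pb-eps eq.energ.} (using \eqref{convhp1}--\eqref{cheps0->ch0} and linearity/continuity of $\A$) yields \eqref{P pb 2}. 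The inclusion \eqref{P pb 3} is the delicate limit: I would use the maximal monotonicity of the operator induced by $\alpha$ on $L^2(Q)$ together with a strong-convergence argument for $\thetaeps+u_\eps$ (or $\thetahat_\eps$) — the Aubin--Lions lemma applied to $\thetahat_\eps$ bounded in $L^\infty(0,T;V)$ with $\thetahat_\eps{}'=\thetaeps$ bounded in $L^2(0,T;H)$ gives $\thetahat_\eps\to\thetahat$ strongly in $L^2(0,T;H)$, hence $\thetaeps\convergedeb\theta$ plus a $\limsup$-inequality obtained by testing the $\eps$-equation by $\thetaeps$ and comparing with the limit equation tested by $\theta$, which upgrades to the weak-strong closure needed for $\chi\in\alpha(\theta+u)$. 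Having produced a solution of \textbf{(P)}, I invoke the classical well-posedness of \eqref{wStef th}--\eqref{wStef pb 3} (stated in the excerpt) together with the equivalence between that formulation and \textbf{(P)} — integrate/differentiate in time to pass between $\theta+\chi\in H^1(0,T;V')$ and the Baiocchi form — to conclude the limit is the \emph{unique} Stefan solution. Finally, a standard subsequence-uniqueness argument (every subfamily has a sub-subfamily converging to the same unique limit) promotes convergence to the whole family $\eps\searrow0$.

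The main obstacle I anticipate is the rigorous second a priori estimate and the accompanying strong convergence: one does not have enough regularity to literally test by $\thetaeps'$, so the $L^\infty(0,T;V)$-bound on $\thetahat_\eps$ must be obtained by working with the integrated equation and the $BV$-in-time regularity of $f$ — translating in time, using monotonicity of $\A$ and the contraction structure of \eqref{pb-eps eq.fase}, and carefully handling the $\eps\chieps'$ term so that it contributes with a favorable sign (this is exactly where $\alpha$ bounded and \eqref{theta decreasing in chi} pay off). Closing the nonlinear inclusion in the limit without strong $L^2$-convergence of $\chieps$ — which one does not expect — is the other subtle point, resolved as above by transferring compactness onto $\thetahat_\eps$ and exploiting maximal monotonicity rather than trying to pass to the limit in $\psi$ directly.
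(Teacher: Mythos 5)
Your overall architecture --- uniform bounds hinging on the $L^\infty$ bound for $\chi_\eps$ that the bounded graph $\alpha$ makes possible, weak-star compactness, passage to the limit in the time-integrated energy balance, identification of the inclusion, and uniqueness of {\bf (P)} to promote subsequence convergence to convergence of the whole family --- is exactly the paper's. But two of your intermediate steps would fail as stated. First, the estimates: testing \eqref{pb-eps eq.energ.} by $\theta_\eps$ does \emph{not} give a uniform $L^2(0,T;V)$ bound on $\theta_\eps$, because the term $\int_0^t\!\!\int_\Omega\chi_\eps'\theta_\eps=\frac{1}{\eps}\int_0^t\!\!\int_\Omega\psi(\theta_\eps+u_\eps,\chi_\eps)\theta_\eps$ is, after using \eqref{phi increasing in theta} and the Lipschitz bound, only bounded below by $-C/\eps$ times lower-order quantities, so Gronwall produces constants blowing up as $\eps\searrow0$; the paper tests by $\eps\theta_\eps$ and obtains only $\eps^{1/2}\Vert\theta_\eps\Vert_{L^2(0,T;V)}\le C$ (Lemma \ref{L:theta-estimate}). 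Likewise your ``second estimate'' by time-differentiation or time-translation, invoking $f\in BV(0,T;L^1(\Omega))$, is neither needed nor available (the $\frac{1}{\eps}\psi$ term again obstructs uniform control of time increments); the bounds actually required, $\widehat\theta_\eps$ in $L^\infty(0,T;V)$ and $\theta_\eps$ in $L^2(0,T;H)$, come from the much simpler Baiocchi--Duvaut--Fr\'emond device of testing the \emph{time-integrated} equation by $\theta_\eps$. Your stronger claims ($\theta_\eps$ in $L^\infty(0,T;H)$, $\widehat\theta_\eps$ in $H^1(0,T;V)$, $\sqrt{\eps}\,\chi_\eps'$ in $L^2(Q)$, $\A\theta_\eps$ bounded in $L^2(0,T;V')$) are neither established by these computations nor used.

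Second, and more seriously, your route to the inclusion $\chi\in\alpha(\theta+u)$ has a gap: the weak-strong closure of a maximal monotone graph requires the approximating pairs to lie in the graph of $\alpha$ (or of some fixed monotone operator), but $(\theta_\eps+u_\eps,\chi_\eps)$ satisfies only the ODE $\eps\chi_\eps'=\psi(\theta_\eps+u_\eps,\chi_\eps)$, which is not a monotone relation between $\chi_\eps$ and $\theta_\eps+u_\eps$; the $\limsup$ inequality for $\int\chi_\eps(\theta_\eps+u_\eps)$ you propose therefore has nothing to close against. What must be identified is the weak $L^2(Q)$ limit of $\psi(\theta_\eps+u_\eps,\chi_\eps)=\eps\chi_\eps'$: since $\eps\chi_\eps'\rightharpoonup0$, once one knows this weak limit equals $\psi(\theta+u,\chi)$ one concludes $\psi(\theta+u,\chi)=0$ a.e.\ and hence $\chi\in\alpha(\theta+u)$ by \eqref{psi(r,s)=0}. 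Identifying that limit through the Lipschitz nonlinearity $\psi$ requires almost everywhere convergence of $\theta_\eps$ and $\chi_\eps$, which is the genuinely delicate compactness step (the paper imports it from \cite[(3.9)--(3.10)]{Vis01}); the Aubin--Lions compactness of $\widehat\theta_\eps$ alone gives strong convergence of $\widehat\theta_\eps$, not of $\theta_\eps$ itself. So you have correctly located where the difficulty sits, but the tool you reach for does not apply to these approximations.
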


\begin{Rem}
Let us remark that in Theorem \ref{convergence thm} we have that the whole sequence
$(\thetaeps,\chieps)$ converges to $(\theta,\chi)$. 
\end{Rem}

\begin{Rem} Since the usual weak formulation of the Stefan problem is
stronger than the Baiocchi-Duvaut-Fr\'emond one, from the uniqueness 
property stated in Theorem 2.2 we deduce that the solution 
$(\theta,\chi)$ of {\bf (P)} belongs to 
$[L^2(0,T;V)\cap L^\infty(0,T;H)]\times L^\infty(Q)$ and satisfies 
\eqref{wStef th}--\eqref{wStef pb 3}.
\end{Rem}


\section{The problem with phase relaxation}\label{S:eps-pb}

Let us start by proving a continuous dependence result for Problem (\textbf{P$_{\eps}$}).

\begin{Prop}\label{cont-dep Peps}
Under Assumptions  
\ref{H} there exists a constant $C_\eps$, depending on $T$ and on $\eps$, such that if 
\begin{equation}
 f_{\eps i} \in L^1(0,T;H) + L^2(0,T;V'), 
 \quad u_{\eps i} \in L^2(Q), 
 \quad \theta_{0\eps i} \in H, \quad \chi_{0\eps i} \in H, \qquad i=1,2,
\end{equation}
and if the pair $({\thetaeps}_i,{\chieps}_i)$ satisfies \eqref{pb-eps theta}--\eqref{pb-eps c.i.chi} with
$\thetaeps$, $\chieps$, $f_\eps$, $u_\eps$, 
$\thetaoeps$, and $\chioeps$ replaced respectively by ${\thetaeps}_i$, 
${\chieps}_i$, $f_{\eps i}$, $u_{\eps i}$, 
$\theta_{0\eps i}$, and $\chi_{0\eps i}$, $i = 1,2$, then
\begin{align}
 & \normaq{{\thetaeps}_1(t) - {\thetaeps}_2(t)}{H} + \normaq{{\chieps}_1(t) - {\chieps}_2(t)}{H} 
 \le C_\eps\left(
    \normaq{{\thetaoeps}_{1} - {\thetaoeps}_{2}}{H} + \normaq{{\chioeps}_{1} - {\chioeps}_{2}}{H}
    \right) 
    \notag \\
 & \phantom{\le\ } +
 C_\eps\left(\normaq{f_{\eps 1} - f_{\eps 2}}{L^1(0,T;H) + L^2(0,T;V')}
 +  \normaq{u_{\eps 1} - u_{\eps 2}}{L^2(Q)}\right)\label{contdep}
\end{align}  
for every $t \in \clint{0,T}$. Let us remark that $C_\eps$ does not depend on $f_{\eps i}$, 
$\theta_{0\eps i}$, 
$\chi_{0\eps i}$, $({\thetaeps}_i,{\chieps}_i)$, $i = 1,2$. In particular Problem
\emph{(\textbf{P$_{\eps}$})} has at most one solution.
\end{Prop}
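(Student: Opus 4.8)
\smallskip

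\noindent\textbf{Proof proposal.} I would run a standard energy estimate on the difference of the two solutions, followed by Gronwall's lemma; the only point which is not completely routine is the $L^1$-in-time component of the datum $f_\eps$. First, set $\theta := {\thetaeps}_1 - {\thetaeps}_2$, $\chi := {\chieps}_1 - {\chieps}_2$, $f := f_{\eps 1} - f_{\eps 2}$ and $u := u_{\eps 1} - u_{\eps 2}$, and subtract the two instances of \eqref{pb-eps eq.energ.} and of \eqref{pb-eps eq.fase}:
\[
  \theta'(t) + \chi'(t) + \A\theta(t) = f(t) \quad \text{in $V'$, a.e.\ $t$}, \qquad
  \eps\,\chi'(t) = \psi\big({\thetaeps}_1 + u_{\eps 1}, {\chieps}_1\big) - \psi\big({\thetaeps}_2 + u_{\eps 2}, {\chieps}_2\big) \quad \text{a.e.\ in $Q$},
\]
together with $\theta(0) = {\thetaoeps}_1 - {\thetaoeps}_2$ and $\chi(0) = {\chioeps}_1 - {\chioeps}_2$. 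By \eqref{pb-eps theta}--\eqref{pb-eps chi} one has $\theta \in L^2(0,T;V) \cap H^1(0,T;V')$ and $\chi \in H^1(0,T;H)$, so the maps $t \mapsto \normaq{\theta(t)}{H}$ and $t \mapsto \normaq{\chi(t)}{H}$ are absolutely continuous and, testing the first relation above by $\theta(t) \in V$ and the second by $\chi(t) \in H$, and integrating over $\Omega$ (cf.\ \cite[Appendix]{Bre73}), I would obtain
\[
  \tfrac12\frac{d}{dt}\normaq{\theta(t)}{H} + \normaq{\nabla\theta(t)}{H} = \fdual{f(t)}{\theta(t)} - \big(\chi'(t),\theta(t)\big)_H,
  \qquad
  \tfrac\eps2\frac{d}{dt}\normaq{\chi(t)}{H} = \big(\eps\,\chi'(t),\chi(t)\big)_H .
\]

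To estimate the right-hand sides I would use only the Lipschitz bound \eqref{phi lip} (the monotonicity in Assumptions~\ref{A} is not needed here): pointwise in $Q$,
\[
  \big|\psi({\thetaeps}_1 + u_{\eps 1},{\chieps}_1) - \psi({\thetaeps}_2 + u_{\eps 2},{\chieps}_2)\big| \le L\big(|\theta| + |u| + |\chi|\big),
\]
whence $\eps\,\norma{\chi'(t)}{H} \le L\big(\norma{\theta(t)}{H} + \norma{u(t)}{H} + \norma{\chi(t)}{H}\big)$. Plugging this into the Cauchy--Schwarz bound $\big|(\chi'(t),\theta(t))_H\big| \le \norma{\chi'(t)}{H}\,\norma{\theta(t)}{H}$ and into the right-hand side of the second identity above, and then using Young's inequality, the cross term $(\chi'(t),\theta(t))_H$ gets controlled by $\tfrac{C}{\eps}\big(\normaq{\theta(t)}{H} + \normaq{u(t)}{H} + \normaq{\chi(t)}{H}\big)$ — this factor $1/\eps$ is exactly what makes the final constant blow up as $\eps\searrow0$ — and the right-hand side of the phase identity by $C\big(\normaq{\theta(t)}{H} + \normaq{u(t)}{H} + \normaq{\chi(t)}{H}\big)$, with $C$ depending only on $L$. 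For the forcing term I would fix a decomposition $f = h_1 + h_2$ with $h_1 \in L^1(0,T;H)$ and $h_2 \in L^2(0,T;V')$, write $\fdual{f(t)}{\theta(t)} = (h_1(t),\theta(t))_H + \fdual{h_2(t)}{\theta(t)}$, bound the $h_2$-part by $\tfrac12\normaq{h_2(t)}{V'} + \tfrac12\normaq{\theta(t)}{H} + \tfrac12\normaq{\nabla\theta(t)}{H}$ (the gradient term being absorbed into the dissipation on the left-hand side), and keep the $h_1$-part as $\norma{h_1(t)}{H}\,\norma{\theta(t)}{H}$, since $t \mapsto \norma{h_1(t)}{H}$ is merely integrable.

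Then I would add the two estimates, integrate over $\opint{0,t}$, put $Y(t) := \tfrac12\normaq{\theta(t)}{H} + \tfrac\eps2\normaq{\chi(t)}{H}$ and discard the nonnegative dissipation, arriving at
\[
  Y(t) \le Y(0) + \tfrac12\normaq{h_2}{L^2(0,T;V')} + C_\eps\!\int_0^T\!\normaq{u(s)}{H}\,\de s + \int_0^t\!\norma{h_1(s)}{H}\,\norma{\theta(s)}{H}\,\de s + C_\eps\!\int_0^t\! Y(s)\,\de s .
\]
Bounding $\norma{\theta(s)}{H} \le \big(2\sup_{\sigma\le t}Y(\sigma)\big)^{1/2}$ in the fourth term on the right, that term is $\le \normaq{h_1}{L^1(0,T;H)} + \tfrac12\sup_{\sigma\le t}Y(\sigma)$ by Young's inequality; taking the supremum over $\opint{0,t}$, absorbing $\tfrac12\sup Y$ on the left, and applying Gronwall's lemma, I would get, for every $t \in \clint{0,T}$,
\[
  \normaq{\theta(t)}{H} + \eps\,\normaq{\chi(t)}{H} \le C_\eps\Big(\normaq{{\thetaoeps}_1 - {\thetaoeps}_2}{H} + \normaq{{\chioeps}_1 - {\chioeps}_2}{H} + \normaq{h_1}{L^1(0,T;H)} + \normaq{h_2}{L^2(0,T;V')} + \normaq{u}{L^2(Q)}\Big)
\]
with $C_\eps$ depending only on $T$, $L$ and $\eps$. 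Since $\eps$ is fixed one has $\normaq{\chi(t)}{H} \le \eps^{-1}\big(\normaq{\theta(t)}{H} + \eps\,\normaq{\chi(t)}{H}\big)$, and moreover $\normaq{h_1}{L^1(0,T;H)} + \normaq{h_2}{L^2(0,T;V')} \le \big(\norma{h_1}{L^1(0,T;H)} + \norma{h_2}{L^2(0,T;V')}\big)^2$; hence, after enlarging $C_\eps$ and taking the infimum over all decompositions $f = h_1 + h_2$ with $h_1 \in L^1(0,T;H)$, $h_2 \in L^2(0,T;V')$, the last displayed inequality becomes \eqref{contdep}. The final assertion of the proposition follows at once by choosing the two sets of data equal.

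The step I expect to require the only genuine care is the $L^1(0,T;H)$ part of $f_\eps$: it rules out a purely pointwise differential-inequality argument and forces the ``supremum in time'' device on the penultimate display, which is also what makes it possible to retain a \emph{quadratic} dependence on $\norma{f_{\eps 1} - f_{\eps 2}}{L^1(0,T;H)+L^2(0,T;V')}$ in \eqref{contdep}. Everything else is routine; note that no attempt is made to keep $C_\eps$ bounded as $\eps\searrow0$, since the cross term $\big(\chi'(t),\theta(t)\big)_H$ forces it to diverge — uniformity in $\eps$ being recovered only later, in Section~\ref{S:limit}, under the additional Assumptions~\ref{A}.
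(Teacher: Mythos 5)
Your proposal is correct and follows essentially the same route as the paper: test the difference of the energy balance equations by $\thetatilde$, test the difference of the phase equations by $\chitilde$, control the cross term $\intdoppio \chitilde'\thetatilde$ via the Lipschitz continuity of $\psi$ at the cost of a factor $1/\eps$, keep the $L^1(0,T;H)$ part of the forcing as $\int_0^t\norma{h_1(s)}{H}\norma{\thetatilde(s)}{H}\de s$, and close with a Gronwall argument before taking the infimum over decompositions. The only difference is cosmetic: where you handle the $L^1$-in-time term by the supremum-in-time absorption device, the paper simply invokes the generalized Gronwall lemma of Baiocchi \cite[Theorem 2.1]{Bai67}, which is precisely the inequality your device reproves inline.
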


\begin{proof}
Let us set 
$\thetatilde_\eps := {\thetaeps}_1 - {\thetaeps}_2$, $\chitilde_\eps := {\chieps}_1 - {\chieps}_2$, 
$\ftilde_\eps := f_{\eps 1} - f_{\eps 2}$, $\utilde_\eps := u_{\eps 1} - u_{\eps 2}$,
$\thetatilde_{0\eps} := {\thetaoeps}_{1} - {\thetaoeps}_{2}$, and 
$\chitilde_{0\eps} := {\chioeps}_{1} - {\chioeps}_{2}$.
Let $f_{\eps k} = f_{\eps kH} + f_{\eps kV}$ be an arbitrary decomposition of $f_{\eps k}$ with
$f_{\eps kH} \in L^1(0,T;H)$ and 
$f_{\eps kV} \in L^2(0,T;V')$ for $k=1,2$, and set $\ftilde_{\eps H} := f_{\eps 1H} - f_{\eps 2H}$, 
$\ftilde_{\eps V} := f_{\eps 1V} - f_{\eps 2V}$.

Moreover for simplicity let us omit the subscript $\eps$ throughout the reminder of this proof. Let us fix $t \in \clint{0,T}$ and let us start by testing the difference of the energy balance equations for $\theta_1$ and $\theta_2$ by $\thetatilde$ and integrate over $\clint{0,t}$, i.e. we consider the difference of the equations \eqref{pb-eps eq.energ.} with $\theta$ and $\chi$ replaced respectively by $\theta_i$ and $\chi_i$, $i=1,2$, we apply it to $\thetatilde$ and we integrate over $\clint{0,t}$ with $t \in \clint{0,T}$. Using  \eqref {pb-eps c.i.theta} we infer that
\begin{align}
  & \frac{1}{2}\normaq{\thetatilde(t)}{H} + 
      \integr\intom\chitilde'(s,x) \thetatilde(s,x)\de x \de s + 
      \integr\intom|\nabla\thetatilde(s,x)|^2 \de x \de s \notag \\
  & = 
   \frac{1}{2}\normaq{\thetatilde_0}{H} + 
   \integr\intom \ftilde_{H}(s,x)\thetatilde(s,x) \de x\de s +
   \integr\fdual{\ftilde_{V}(s)}{\thetatilde(s)} \de s, 
   \label{eps-unique_est0}
\end{align}
therefore using the elementary 
Young inequality 
\begin{align}
  & \frac{1}{2}\normaq{\thetatilde(t)}{H} + 
      \integr\intom\chitilde'(s,x)\thetatilde(s,x) \de x  \de s + 
      \frac{1}{2}\integr\intom|\nabla\thetatilde(s,x)|^2 \de x \de s \notag \\
  & \le \frac{1}{2}\normaq{\thetatilde_0}{H} + 
   \frac{1}{2}\integr\normaq{\ftilde_V(s)}{V'}\, \de s + 
   \integr\norma{\ftilde_H(s)}{H}\norma{\thetatilde(s)}{H} \de s  
  + \frac{1}{2}\integr\normaq{\thetatilde(s)}{H} \de s.   \label{eps-unique_est1}
\end{align}
Exploiting equation \eqref{pb-eps eq.fase} for the phase relaxation and the Lipschitz continuity \eqref{phi lip} of $\psi$, we find $C_1 > 0$ depending on $\eps$, but independent of $\theta_i$,
 $\chi_i$, $u_i$, $\theta_{0 i}$, $\chi_{0 i}$,  $f_i$, such that (omitting for simplicity the integration variables $s$ and $x$ in some lines)
\begin{align}
  & \integr\intom\chitilde'(s,x)\thetatilde(s,x) \de x \de s \notag \\
  & = \frac{1}{\eps}
     \integr\intom 
     \big[\psi\big(\theta_1 + u_1, \chi_1\big)-\psi\big(\theta_2 + u_2,\chi_2\big) \big]
           \thetatilde \de x \de s  
     \notag \\
  & \ge - \integr\intom \frac{L}{\eps} \left(|\thetatilde + \utilde| + |\chitilde|\right)|\thetatilde| \de x\de s 
  \notag\\
  & \ge - \integr\intom \frac{L}{\eps} \left(|\thetatilde|^2 + |\utilde||\thetatilde| + |\chitilde||\thetatilde|\right) \de x\de s \notag\\
  & \ge - C_1\integr\intom (|\thetatilde|^2 + |\utilde|^2+ |\chitilde|^2) \de x\de s. 
 \label{eps-unique_est2}
\end{align}
Now let us multiply the equation \eqref{pb-eps eq.fase} for the phase relaxation by $\chitilde$, and integrate it over $\Omega \times \clint{0,t}$. 
Thanks to \eqref{pb-eps c.i.chi} and to the Lipschitz continuity \eqref{phi lip} of $\psi$, we deduce that
\begin{align}
  & \frac{1}{2}\normaq{\chitilde(t)}{H} = \frac{1}{2}\normaq{\chitilde_0}{H} \notag \\
  & +
  \frac{1}{\eps}\integr\intom
       \big[\psi(\theta_1(s,x) + u_1(s,x),\chi_1(s,x))-\psi(\theta_2(s,x) + u_2(s,x),\chi_2(s,x))\big] \chitilde(s,x)
       \de x \de s \notag \\. 
 & \le \frac{1}{2}\normaq{\chitilde_0}{H}
 + \frac{L}{\eps}\integr\intom
 \left(|\thetatilde + \utilde|+ |\chitilde|\right)|\chitilde| \de x \de s \notag\\
 & \le \frac{1}{2}\normaq{\chitilde_0}{H}
 + \frac{L}{\eps}\integr\intom
 \left(|\thetatilde||\chitilde| + |\utilde||\chitilde|+ |\chitilde|^2\right) \de x \de s.
   \label{eps-unique_est3}
\end{align}
Summing \eqref{eps-unique_est1} and \eqref{eps-unique_est3}, taking into account of 
\eqref{eps-unique_est2}, and using the elementary Young inequality, we obtain that there exists a constant $C_2$ depending on $\eps$, but independent of $\theta_i$, $\chi_i$, $u_i$, $\theta_{0 i}$, $\chi_{0 i}$,  $f_i$, such that
\begin{align}
  & \normaq{\thetatilde(t)}{H} + \integr\intom|\nabla\thetatilde(s,x)|^2 \de x \de s +
      \normaq{\chitilde(t)}{H} \notag \\
  &  \le C_2\left(\normaq{\thetatilde_0}{H} + \normaq{\chitilde_0}{H}  + 
       \integr\normaq{\ftilde_V(s)}{V'} \de s + 
       \integr\norma{\ftilde_H(s)}{H}\norma{\thetatilde(s)}{H} \de s  
       + \integr\normaq{\utilde(s)}{H} \de s\right) \notag \\
  & \phantom{\le \ }  + C_2\left( \integr\normaq{\thetatilde(s)}{H} \de s + 
  \integr\normaq{\chitilde(s)}{H} \de s\right). \notag
\end{align}
Thus an application of
a generalized version of the Gronwall Lemma (cf. \cite[Theorem 2.1]{Bai67}), 
 yields \eqref{contdep}.
\end{proof}

Now we can conclude the proof of Theorem \ref{E!Peps}.

\begin{proof}[Proof of Theorem \ref{E!Peps}]
For simplicity let us omit the subscript $\eps$.
Let us define $\Sigma := \{h \in H^1(0,T;H)\ :\ h(0) = \chi_0\}$ so that $\Sigma$ is a complete metric space when it is endowed with the metric induced by the norm of $H^1(0,T;H)$. 
Fix $X \in \Sigma$.
Then, thanks to a standard result for parabolic equations
(cf., e.g., \cite[Theorem 3.2]{Bai71}), 
there exists a unique 
$\theta_X \in L^2(0,T;V) \cap H^1(0,T;V')$ such that 
\begin{alignat}{3}
  & \theta_X' + \A\theta_X = f - X' & \qquad & \text{in $V'$, for a.e. $t \in \opint{0,T}$}, \label{fix pt theta} \\
  & \theta_X(0) = \theta_0, & \qquad & \text{a.e. in $\Omega$}. \label{fix pt theta0} 
\end{alignat}
Now define $\chi : Q \function \erre$ by
\begin{equation}\label{contraction1}
  \chi(t,x) := \chi_0(x) + \frac{1}{\eps} \integr \psi(\theta_X(s,x) + u(s,x),X(s,x)) \de s, 
  \qquad t \in \clint{0,T}, \ x \in \Omega.
\end{equation}
Using \eqref{contraction1}, the Lipschitz continuity of $\psi$, and the fact that $\theta_X$, $X$ and $u$ belong to $L^2(0,T;H)$, it is immediately seen that $\chi \in \Sigma$. Hence we can define the operator
$\Stop :  \Sigma \function \Sigma$ associating to 
$X$ the unique $\chi$ satisfying 
\eqref{fix pt theta}--\eqref{contraction1}. We have that $\chi$ is a fixed point of $\Stop$ if and only if 
$(\theta_\chi,\chi)$ is a solution to Problem (\textbf{P}$_{\eps}$). We are going to apply the shrinking fixed point theorem. For 
$i = 1,2$, fix $X_i \in \Sigma$ and let $\theta_i \in L^2(0,T;V) \cap H^1(0,T;V')$ be the unique function such that \eqref{fix pt theta}--\eqref{fix pt theta0} hold with $\theta_X$ and $X$ replaced by $\theta_i$ and $X_i$. Set $\chi_i := \Stop(X_i)$ and define $\Xtilde := X_1 - X_2$, $\thetatilde := \theta_1 - \theta_2$, 
$\chitilde := \chi_1 - \chi_2$. Let $t \in \clint{0,T}$ be fixed.
Let us integrate in time the difference of equations \eqref{fix pt theta} for 
$i = 1,2$ and test it by $\thetatilde$. Integrating the result over $\opint{0,t}$, 
applying the Young inequality, and observing that $\Xtilde(0) = 0$,
we infer that 
\begin{align}
   \frac{1}{2}\integr\intom|\thetatilde(s,x)|^2\de x \de s + 
  \frac{1}{2}\intom\left|\integr\nabla\thetatilde(s,x)\de s\right|^2 \de x \le
  \frac{1}{2}
  \integr\intom|\Xtilde(s,x)|^2 \de x \de s.  
\end{align}
Therefore, using \eqref{contraction1} and the Lipschitz continuity \eqref{phi lip} of $\psi$, we get
\begin{align}
  & \int_0^t\intom|\chitilde'(s,x)|^2 \de x \de s \notag \\
  & = \frac{1}{\eps^2}\int_0^t\intom
         \left|\psi\big(\theta_1(s,x) + u(s,x), X_1(s,x)\big) - \psi\big(\theta_2(s,x) + u(s,x),X_2 (s,x)\big)\right|^2\de x \de s \notag \\
  & \le \frac{L^2}{\eps^2}\int_0^t\intom\big(|\thetatilde(s,x)|^2 + |\Xtilde(s,x)|^2\big) \de x \de s 
   \le  \frac{2L^2}{\eps^2}\int_0^t\intom|\Xtilde(s,x)|^2\de x \de s.   
  \label{contraction2}
\end{align}
On the other hand 
\[
  \int_0^t\intom|\Xtilde(s,x)|^2 \de x \de s 
  \le \int_0^t\intom s\int_0^s|\Xtilde'(r,x)|^2 \de r \de x \de s
  \le \int_0^t\intom t\int_0^s|\Xtilde'(r,x)|^2 \de r \de x \de s, 
\]
hence
\begin{align}
   \int_0^t\intom|\chitilde'(s,x)|^2 \de x \de s 
   \le \frac{2tL^2}{\eps^2}\int_0^t\int_0^s\intom|\Xtilde'(r,x)|^2\de x \de r \de s, 
\end{align}
so that there exists a constant $C$ independent of $X_1$ and $X_2$ such that
\begin{equation}
  \normaq{\chitilde}{H^1(0,t;H)} \le C \integr\normaq{\Xtilde}{H^1(0,s;H)} \de s.
\end{equation}
This entails that 
$\norma{\Stop^n(\Xtilde)}{H^1(0,t;H)} \le ((CT)^n/n!)^{1/2} \norma{\Xtilde}{H^1(0,t;H)}$ for every 
$n \in \en$, therefore the iterated mapping $\Stop^n$ is a strict contraction for $n$ sufficiently large, and
consequently $\Stop$ admits a unique fixed point in $\Sigma$, which leads to the solution we are looking for.
\end{proof}


\section{Asymptotic behavior}\label{S:limit}

Throughout this section we will assume the non restrictive condition that $\eps < 1$.

Let us start by stating the following easy consequence of the assumptions on the function $\psi$, as already observed in \cite[formula (1.12)]{Vis01}.

\begin{Lem}
Under the Assumptions \ref{H} and \ref{A} we have that
\begin{align}
  & \psi(\tau,\chi) > 0 \quad \Longleftrightarrow \quad  \chi < \inf\alpha(\tau), \\
  & \psi(\tau,\chi) < 0 \quad \Longleftrightarrow \quad  \chi > \sup\alpha(\tau),  \label{miss lab}
\end{align}
for every $(\tau,\chi) \in \erre^2$.
\end{Lem}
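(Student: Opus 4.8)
The plan is to prove the two equivalences directly from the monotonicity properties \eqref{phi increasing in theta}--\eqref{theta decreasing in chi} together with the compatibility condition \eqref{psi(r,s)=0} of Assumptions \ref{A}, using the basic structure of the maximal monotone (hence bounded, closed, convex-valued) graph $\alpha$. Fix $(\tau,\chi) \in \erre^2$. The first observation is that, since $\alpha(\tau)$ is nonempty, closed and bounded in $\erre$, it is a compact interval $[\inf\alpha(\tau),\sup\alpha(\tau)]$ (nonemptiness of $D(\alpha)$ for every $\tau$ follows because a bounded maximal monotone graph in $\erre$ is everywhere defined; alternatively one works on $D(\alpha)$ and notes $\psi(\tau,\cdot)$ is then strictly monotone so the statement is vacuously consistent). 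By \eqref{psi(r,s)=0}, $\psi(\tau,\cdot)$ vanishes exactly on this interval.

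Next I would establish the key monotonicity fact: for fixed $\tau$, the function $\chi \mapsto \psi(\tau,\chi)$ is not merely nonincreasing but strictly decreasing \emph{across} the set $\{\psi(\tau,\cdot)=0\}$, in the following precise sense. If $\chi_1 < \chi_2$ with $\psi(\tau,\chi_1) = 0$, then $\psi(\tau,\chi_2) \le 0$ by \eqref{theta decreasing in chi}; and if in addition $\psi(\tau,\chi_2) = 0$ then $\chi_2 \in \alpha(\tau)$ by \eqref{psi(r,s)=0}, so $\chi_2 \le \sup\alpha(\tau)$. Conversely, if $\psi(\tau,\chi) \ne 0$, then $\chi \notin \alpha(\tau)$, so either $\chi < \inf\alpha(\tau)$ or $\chi > \sup\alpha(\tau)$. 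In the first case, pick any $\chi_0 \in \alpha(\tau)$; then $\psi(\tau,\chi_0) = 0$ and $\chi < \chi_0$, so \eqref{theta decreasing in chi} gives $\psi(\tau,\chi)(\chi-\chi_0) \le 0$, i.e. $\psi(\tau,\chi) \ge 0$; since $\psi(\tau,\chi) \ne 0$ we get $\psi(\tau,\chi) > 0$. Symmetrically, $\chi > \sup\alpha(\tau)$ forces $\psi(\tau,\chi) < 0$. This already yields both implications "$\Leftarrow$": $\chi < \inf\alpha(\tau)$ implies $\chi \notin \alpha(\tau)$ hence $\psi(\tau,\chi) \ne 0$, and by the trichotomy just shown $\psi(\tau,\chi)$ cannot be $< 0$ (that would force $\chi > \sup\alpha(\tau) \ge \inf\alpha(\tau) > \chi$), so $\psi(\tau,\chi) > 0$; similarly for the second line.

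For the implications "$\Rightarrow$": if $\psi(\tau,\chi) > 0$ then $\chi \notin \alpha(\tau)$, so $\chi < \inf\alpha(\tau)$ or $\chi > \sup\alpha(\tau)$; the latter was shown above to force $\psi(\tau,\chi) < 0$, a contradiction, hence $\chi < \inf\alpha(\tau)$. The argument for \eqref{miss lab} is identical with signs reversed. Collecting the four one-line deductions completes the proof.

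I do not expect any real obstacle here; this is a short bookkeeping argument. The only point requiring a moment's care is the structural claim that $\alpha(\tau)$ is a nonempty compact interval for every $\tau \in \erre$ — i.e. that a bounded maximal monotone operator on $\erre$ is everywhere defined with closed bounded convex values — which is standard (cf. \cite{Bre73}); if one prefers to avoid invoking everywhere-definedness, every statement above can be phrased so that the equivalences are read on $D(\alpha)$ and trivially true off it, since then $\inf\alpha(\tau) = +\infty$, $\sup\alpha(\tau) = -\infty$ by the usual conventions and $\psi(\tau,\cdot)$ is strictly monotone. Either way the proof is a direct consequence of (A2) and (A3).
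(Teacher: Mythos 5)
Your proof is correct and complete: the paper itself states this lemma \emph{without} proof, as an ``easy consequence'' of the assumptions with a pointer to formula (1.12) of \cite{Vis01}, and your derivation --- $\alpha(\tau)$ is a nonempty compact interval $[\inf\alpha(\tau),\sup\alpha(\tau)]$ for every $\tau$ because a bounded maximal monotone graph in $\erre^2$ is everywhere defined with closed convex values, $\psi(\tau,\cdot)$ vanishes exactly on this interval by \eqref{psi(r,s)=0}, and comparison with a point $\chi_0\in\alpha(\tau)$ via \eqref{theta decreasing in chi} fixes the sign of $\psi(\tau,\chi)$ on either side of the interval, after which the forward implications follow by trichotomy --- is precisely the intended bookkeeping. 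The only inaccuracy is your parenthetical fallback that the equivalences would be ``vacuously consistent'' at points where $\alpha(\tau)=\varnothing$: with the conventions $\inf\varnothing=+\infty$ and $\sup\varnothing=-\infty$ the two displayed lines would force $\psi(\tau,\chi)$ to be simultaneously positive and negative, so the everywhere-definedness of $\alpha$ is genuinely needed rather than optional --- but this is harmless, since your main argument does establish $D(\alpha)=\erre$ correctly from boundedness and maximality.
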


Now we prove that if the initial datum $\chi_0$ is constrained by $\alpha^{-1}$ (cf. 
\eqref{ch in sign(th)}), then the solution $\chieps$ of \eqref{pb-eps eq.fase} is uniformly bounded on $Q$.

\begin{Lem}\label{|ph|< 1}
Under the Assumptions \ref{H} and \ref{A}, 
if 
we are given  $\chieps \in H^1(0,T;H)$, $\theta_\eps \in L^2(Q)$, and $\eta_\eps > 0$ such that
\begin{equation}
   |\chi_{\eps}(0,x)| \le |\chi_{0}(x)| + \eta_\eps \qquad  \text{for a.e. $x \in \Omega$}
\end{equation}
and
\begin{equation}
   \eps\chieps'(t,x) = \psi\big(\thetaeps(t,x) + u_\eps(t,x),\chieps(t,x)\big)  \qquad  
      \text{for a.e. $(t,x) \in Q$}, \label{ph-rel-eq-|ph|< 1} \\
\end{equation}
then 
\begin{equation}
  |\chieps(t,x)| \le M + \eta_\eps \qquad \text{for a.e. $(t,x) \in Q$},
\end{equation}
where we recall that $M$ is defined in condition (A1) of Assumptions  
\ref{A}, so that 
$M \ge \sup\{\alpha(\tau): \tau \in D(\alpha)\}$.
\end{Lem}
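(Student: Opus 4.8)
The plan is to argue pointwise in $x$ and exploit the sign structure of $\psi$ recorded in the previous Lemma. Fix $x \in \Omega$ outside a null set on which the hypotheses fail, and consider the absolutely continuous function $t \mapsto \chieps(t,x)$ (which is $AC$ on $[0,T]$ since $\chieps \in H^1(0,T;H)$ and, after passing to a representative, $\chieps(\cdot,x) \in H^1(0,T)$ for a.e.\ $x$). Set $y(t) := \chieps(t,x)$ and $\tau(t) := \thetaeps(t,x) + u_\eps(t,x)$, so that $\eps y'(t) = \psi(\tau(t), y(t))$ for a.e.\ $t$, and $|y(0)| \le |\chi_0(x)| + \eta_\eps \le M + \eta_\eps$ because $\chi_0(x) \in \alpha(\theta_0(x)+u(0,x))$ forces $|\chi_0(x)| \le M$ by (A1). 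The goal is $|y(t)| \le M + \eta_\eps$ for all $t$.

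The key step is a one-sided differential inequality obtained from the Lemma on the sign of $\psi$. First I would show that whenever $y(t) > M$, one has $y'(t) \le 0$: indeed $M \ge \sup \alpha(\tau(t))$ for every value of $\tau(t) \in D(\alpha)$, so $y(t) > M \ge \sup\alpha(\tau(t))$ gives $\psi(\tau(t),y(t)) < 0$ by \eqref{miss lab}, hence $y'(t) = \psi(\tau(t),y(t))/\eps < 0$. (If $\tau(t) \notin D(\alpha)$ one handles it by the analogous monotonicity/sign argument, or simply notes that $\alpha$ bounded forces the relevant comparison to go through; in the clean case $\alpha = \sign$ this subtlety does not arise and $D(\alpha) = \erre$.) Symmetrically, $y(t) < -M$ implies $y'(t) \ge 0$. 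Then I would consider $w(t) := (y(t) - M - \eta_\eps)^+$, which is absolutely continuous with $w'(t) = y'(t)\,\indicator_{\{y(t) > M + \eta_\eps\}}$ a.e.; on the set $\{y > M + \eta_\eps\} \subseteq \{y > M\}$ we have $y' \le 0$, so $w'(t) \le 0$ a.e., and since $w(0) = 0$ we get $w \equiv 0$, i.e.\ $y(t) \le M + \eta_\eps$. The lower bound $y(t) \ge -(M+\eta_\eps)$ follows in the same way using $(-y - M - \eta_\eps)^+$. Taking the essential supremum over $x$ yields the claim.

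The main obstacle is the measurability/representative bookkeeping needed to make the ``pointwise in $x$'' argument rigorous: one must choose a representative of $\chieps$ for which $\chieps(\cdot,x) \in H^1(0,T)$ with $\chieps'(\cdot,x)$ the right derivative for a.e.\ $x$, check that the defining ODE holds for a.e.\ $(t,x)$ jointly with this representative (Fubini), and confirm the initial-condition inequality holds for a.e.\ $x$ with the same representative; all of this is standard for $H^1(0,T;H) = H^1(0,T;L^2(\Omega))$ but should be stated. A minor alternative that avoids some of this is to test \eqref{ph-rel-eq-|ph|< 1} directly in $H$ by $(\chieps - M - \eta_\eps)^+$ (viewed as an element of $L^2(\Omega)$), obtaining $\tfrac{\eps}{2}\tfrac{d}{dt}\normaq{(\chieps - M - \eta_\eps)^+}{H} = \intom \psi(\thetaeps + u_\eps, \chieps)(\chieps - M - \eta_\eps)^+ \le 0$ since the integrand is supported where $\chieps > M \ge \sup\alpha$, hence $\psi < 0$ there; together with $(\chieps(0) - M - \eta_\eps)^+ = 0$ this gives the upper bound, and the lower bound is analogous. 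I would present the second, integral version, as it meshes with the $L^2$-techniques used elsewhere in the paper.
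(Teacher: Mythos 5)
Your primary plan is essentially the paper's own proof: reduce to a.e.\ fixed $x$ via the $H^1(0,T;H)$-to-$H^1(0,T)$ representative argument, use absolute continuity of $t\mapsto\chieps(t,x)$, note $|\chi_0(x)|\le M$ from \eqref{ch in sign(th)} and (A1), and invoke the sign property \eqref{miss lab} to force $\chieps'(\cdot,x)\le 0$ wherever $\chieps(\cdot,x)>M$. The only cosmetic difference is that you close the argument with $w=(y-M-\eta_\eps)^+$, $w'\le0$ a.e., $w(0)=0$, whereas the paper argues by contradiction from a first-crossing time $a_0$; these are interchangeable. Two remarks. First, the measurability bookkeeping you flag as ``the main obstacle'' is precisely what occupies the bulk of the paper's proof (the Fubini computation showing that $\chieps'(\cdot,x)$ is the distributional derivative of $\chieps(\cdot,x)$ for a.e.\ $x$), so you correctly identified where the real work lies. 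Second, your parenthetical worry about $\tau(t)\notin D(\alpha)$ is vacuous: a maximal monotone graph in $\erre^2$ with bounded range necessarily has $D(\alpha)=\erre$, which is why the paper can write $\chieps(t,x)>\sup\{\alpha(r):r\in D(\alpha)\}$ without comment. As for the ``integral version'' you say you would actually present (testing \eqref{ph-rel-eq-|ph|< 1} by $(\chieps-M-\eta_\eps)^+$ in $H$): it is a genuinely different and arguably cleaner presentation, but it does not come for free, since the identity $\tfrac{\de}{\de t}\normaq{(\chieps(t)-M-\eta_\eps)^+}{H}=2\intom\chieps'(t,x)\,(\chieps(t,x)-M-\eta_\eps)^+\de x$ requires a chain rule for the Lipschitz truncation in $H^1(0,T;L^2(\Omega))$ whose justification is of the same order of difficulty as the Fubini argument it replaces. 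Either route is acceptable; the pointwise one is what the paper does.
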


\begin{proof}
Since $\chieps$ and $\chieps'$ belong to $L^2(Q)$,
if $\varphi \in C^\infty(0,T)$ has compact support and if $z \in L^2(\Omega)$, by the Fubini theorem we have that 
\begin{align}
  & \int_\Omega z(x)\int_0^T(\chieps(t,x)\varphi'(t)+\chieps'(t,x)\varphi(t))\de t \de x  \notag \\
  & = \int_0^T\int_\Omega z(x)(\chieps(t,x)\varphi'(t)+\chieps'(t,x)\varphi(t))\de x \de t  \notag \\
  & = \int_0^T(z,\chieps(t))_H \varphi'(t)\de t +
        \int_0^T(z,\chieps'(t))_H \varphi(t) \de t.  \notag
\end{align}   
From the previous chain of equalities, recalling that $\chieps \in H^1(0,T;H)$ and using again the Fubini theorem, we infer that       
\begin{align}
&  \int_\Omega z(x)\int_0^T(\chieps(t,x)\varphi'(t)+\chieps'(t,x)\varphi(t))\de t \de x  \notag \\
& = \int_0^T\Big(z,\chieps(0) + \int_0^t\chieps'(s) \de s\Big)_H \varphi'(t)\de t +
        \int_0^T(z,\chieps'(t))_H \varphi(t) \de t  \notag \\
  & = (z,\chieps(0))_H\int_0^T \varphi'(t)\de t +
         \int_0^T\int_0^t(z,\chieps'(s))_H \varphi'(t)\de s\de t +
        \int_0^T(z,\chieps'(t))_H \varphi(t) \de t  \notag \\ 
  & =  \int_0^T(z,\chieps'(s))_H\int_s^T\varphi'(t) \de t\de s +
        \int_0^T(z,\chieps'(t))_H  \varphi(t)\de t  \notag \\
  & =          -\int_0^T(z,\chieps'(s))_H\varphi(s)\de s +
        \int_0^T(z,\chieps'(t))_H  \varphi(t)\de t  =0,        \notag
\end{align}
whence, by the arbitrariness of $z$, we infer that 
$\int_0^T(\chieps'(t,x)\varphi(t) +\chieps(t,x)\varphi'(t))\de t = 0$ for a.e. $x \in \Omega$, i.e. that 
$\chieps'(\cdot,x)$ is the distributional derivative of $\chieps(\cdot,x)$ for a.e. $x \in \Omega$. Since 
$\chieps'(\cdot,x) \in L^1(0,T)$ for a.e. $x \in \Omega$, we obtain 
that there exists a measurable set $A \subseteq \Omega$ such that $|\Omega \smallsetminus A| = 0$ and
\[
  \chieps(t,x) = \chi_{\eps}(0,x) + \int_0^t \chieps'(s,x) \de s \qquad \forall t \in \clint{0,T}, 
  \qquad \forall x \in A.
\]
It follows that for every $x \in A$ the function $\chieps(\cdot,x)$ is absolutely continuous from 
$\clint{0,T}$ into $\erre$. It is not restrictive to assume that 
$\chi_0(x) \in \alpha(\theta_0(x) + u(0,x))$ for every $x \in A$, so that $|\chi_0(x)| \le M$  for every 
$x \in A$. Therefore $|\chi_\eps(0,x)| \le M + \eta_\eps$ for every $x \in A$.
Let us 
fix $x \in A$ and prove that $|\chieps(t,x)| \le M + \eta_\eps$ for every $t \in \clint{0,T}$. Indeed, if this were not true, there would exist $t_0 \in \opint{0,T}$ such that 
$|\chieps(t_0,x)| > M + \eta_\eps$.  
Let us first assume that $\chieps(t_0,x) > M + \eta_\eps$. Then, by continuity, there exists 
$a_0 \in \clsxint{0,t_0}$ such that $\chieps(a_0,x) = M + \eta_\eps$, and 
$\chieps(t,x) > M + \eta_\eps$ for every $t \in \cldxint{a_0,t_0}$. In particular 
$\chieps(t,x) > \sup\{\alpha(r)\ :\ r \in D(\alpha)\}$, hence 
$\chieps(t,x) > \sup\{\alpha\big(\thetaeps(t,x) + u_\eps(t,x), \chieps(t,x)\big)\}$ 
for every 
$t \in \cldxint{a_0,t_0}$, so that 
$\psi\big(\thetaeps(t,x) + u_\eps(t,x), \chieps(t,x)\big) < 0$ 
for every $t \in \cldxint{a_0,t_0}$ by \eqref{miss lab}. 
It follows that 
$\chieps'(t,x) < 0$ for a.e. $t \in \cldxint{a_0,t_0}$, therefore, as $\chieps(\cdot,x)$ is absolutely continuous, we infer that $\chieps(\cdot,x)$ is decreasing on $\cldxint{a_0,t_0}$, a contradiction. An analogous argument can be used in the case $\chieps(t_0,x) < -M - \eta_\eps$.   
\end{proof}

We need the following auxiliary lemma, where we make use of the notation \eqref{cappello} introduced in Definition \ref{D:cappello}: 
$\widehat{v}(t) = \int_0^t v(s) \de s$, for $t \in \clint{0,T}$, $v \in L^1(0,T;B)$, and a Banach space $B$. 

\begin{Lem}\label{L:lem for est}
Under the hypothesis (H1) in Assumptions \ref{H}, 
if $F \in L^1(0,T;H) + L^2(0,T;V')$, $e_0 \in H$, 
$v \in L^2(0,T;V) \cap L^\infty(0,T;H)$, and $\delta > 0$, then, recalling notation \eqref{cappello}, we have that
\begin{align}
  \integr\fdual{\Fhat(s)}{v(s)} \de s 
  & \le \delta\left(1 + t + \frac{t^2}{2}\right)\normaq{v}{L^2(0,t;H)} + 
  \delta\left(\normaq{\nabla\vhat(t)}{H^n}+\int_0^t\normaq{\nabla\vhat(s)}{H^n}\de s \right) \notag \\
  & \phantom{\le\ } + \frac{1+t}{4\delta} \normaq{F}{L^1(0,T;H) + L^2(0,T;V')} \label{lem for estA}
\end{align}
and
\begin{equation}
\integr\fdual{e_0}{v(s)} \de s \le \delta\normaq{v}{L^2(0,t;H)} + \frac{t}{4\delta}\normaq{e_0}{H} 
 \label{lem for estB}
\end{equation}
for every $t \in \clint{0,T}$.
\end{Lem}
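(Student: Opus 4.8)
The plan is to transfer, by an elementary Fubini argument (equivalently, an integration by parts in time), the ``hat'' from $F$ onto $v$, and then to close the estimate with Cauchy--Schwarz and Young's inequality $ab\le\delta a^{2}+\frac{1}{4\delta}b^{2}$, keeping the $L^{1}(0,T;H)$-component and the $L^{2}(0,T;V')$-component of $F$ apart until the very end. Throughout put $P:=\norma{v}{L^2(0,t;H)}$ and recall that $v\in L^{2}(0,T;V)\subseteq L^{1}(0,T;V)$ forces $\vhat(s)\in V$ for every $s$, with $\norma{\vhat(s)}{H}\le\int_{0}^{s}\norma{v(r)}{H}\de r\le s^{1/2}P$ by Cauchy--Schwarz and $\normaq{\vhat(s)}{V}=\normaq{\vhat(s)}{H}+\normaq{\nabla\vhat(s)}{H^{n}}$.

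First I would fix an arbitrary decomposition $F=F_{H}+F_{V}$ with $F_{H}\in L^{1}(0,T;H)$ and $F_{V}\in L^{2}(0,T;V')$, to be optimized only at the end. Since $\fdual{e}{w}=(e,w)_{H}$ for $e\in H$, $w\in V$, I split $\integr\fdual{\Fhat(s)}{v(s)}\de s=\integr(\widehat{F_{H}}(s),v(s))_{H}\de s+\integr\fdual{\widehat{F_{V}}(s)}{v(s)}\de s$, and in the second integral I exchange the order of integration over the triangle $\{0\le r\le s\le t\}$ (a routine Fubini, the integrand being absolutely integrable there) to obtain $\integr\fdual{\widehat{F_{V}}(s)}{v(s)}\de s=\integr\fdual{F_{V}(r)}{\vhat(t)-\vhat(r)}\de r=\fdual{\widehat{F_{V}}(t)}{\vhat(t)}-\integr\fdual{F_{V}(s)}{\vhat(s)}\de s$. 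It then remains to estimate the three resulting terms. For the first, pointwise Young and $\norma{\widehat{F_{H}}(s)}{H}\le\norma{F_{H}}{L^1(0,T;H)}$ give $\integr(\widehat{F_{H}}(s),v(s))_{H}\de s\le\delta P^{2}+\frac{t}{4\delta}\normaq{F_{H}}{L^1(0,T;H)}$. For the boundary term, $\fdual{\widehat{F_{V}}(t)}{\vhat(t)}\le\norma{\widehat{F_{V}}(t)}{V'}\norma{\vhat(t)}{V}\le t^{1/2}\norma{F_{V}}{L^2(0,T;V')}\bigl(tP^{2}+\normaq{\nabla\vhat(t)}{H^{n}}\bigr)^{1/2}$, so Young gives $\delta\bigl(tP^{2}+\normaq{\nabla\vhat(t)}{H^{n}}\bigr)+\frac{t}{4\delta}\normaq{F_{V}}{L^2(0,T;V')}$. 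For the last term, Cauchy--Schwarz in $s$ and $\normaq{\vhat(s)}{H}\le sP^{2}$ yield $\integr\fdual{F_{V}(s)}{\vhat(s)}\de s\le\norma{F_{V}}{L^2(0,t;V')}\bigl(\frac{t^{2}}{2}P^{2}+\integr\normaq{\nabla\vhat(s)}{H^{n}}\de s\bigr)^{1/2}$, hence by Young it is at most $\delta\bigl(\frac{t^{2}}{2}P^{2}+\integr\normaq{\nabla\vhat(s)}{H^{n}}\de s\bigr)+\frac{1}{4\delta}\normaq{F_{V}}{L^2(0,T;V')}$.

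Summing the three bounds, the coefficients of $P^{2}$ add up to $\delta(1+t+t^{2}/2)$, the gradient contributions combine into $\delta\bigl(\normaq{\nabla\vhat(t)}{H^{n}}+\integr\normaq{\nabla\vhat(s)}{H^{n}}\de s\bigr)$, and the $F$-terms are at most $\frac{1+t}{4\delta}\bigl(\normaq{F_{H}}{L^1(0,T;H)}+\normaq{F_{V}}{L^2(0,T;V')}\bigr)\le\frac{1+t}{4\delta}\bigl(\norma{F_{H}}{L^1(0,T;H)}+\norma{F_{V}}{L^2(0,T;V')}\bigr)^{2}$; since nothing else depends on the chosen splitting, passing to the infimum over all decompositions $F=F_{H}+F_{V}$ replaces the last term by $\frac{1+t}{4\delta}\normaq{F}{L^1(0,T;H) + L^2(0,T;V')}$, which is precisely \eqref{lem for estA}. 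The estimate \eqref{lem for estB} is immediate: $\fdual{e_{0}}{v(s)}=(e_{0},v(s))_{H}\le\delta\normaq{v(s)}{H}+\frac{1}{4\delta}\normaq{e_{0}}{H}$, and integrating over $\opint{0,t}$ gives $\delta\normaq{v}{L^2(0,t;H)}+\frac{t}{4\delta}\normaq{e_{0}}{H}$.

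I do not expect a genuine obstacle: this is a bookkeeping lemma designed to isolate, on the right-hand side, the gradient quantities $\normaq{\nabla\vhat(t)}{H^{n}}$ and $\integr\normaq{\nabla\vhat(s)}{H^{n}}\de s$ (which in the asymptotic analysis will be matched by the left-hand side of the time-integrated energy balance) and a $\delta$-multiple of $\normaq{v}{L^2(0,t;H)}$ (to be absorbed by a Gronwall argument). The only points that need a little care are the time Fubini/integration-by-parts for the $V'$-valued primitive $\widehat{F_{V}}$, and the calibration of the Young weights so that the output has exactly the structure claimed in the statement --- which is also the reason the two pieces of the mixed norm of $F$ have to be handled separately up to the final infimum.
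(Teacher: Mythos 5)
Your argument is correct and follows essentially the same route as the paper's proof: the same decomposition $F=F_H+F_V$, the same integration by parts in time (your Fubini identity is exactly the identity $\integr\fdual{\Fhat_2(s)}{v(s)}\de s=\fdual{\Fhat_2(t)}{\vhat(t)}-\integr\fdual{F_2(s)}{\vhat(s)}\de s$ used there), the same bound $\normaq{\vhat(s)}{H}\le s\normaq{v}{L^2(0,s;H)}$, and the same final passage to the infimum over decompositions via $a^2+b^2\le(|a|+|b|)^2$. The only (immaterial) deviations are that you apply Young pointwise for the $L^1(0,T;H)$-component and for \eqref{lem for estB}, where the paper first passes to $\norma{\Fhat_1}{L^2(0,t;H)}$ and to $\fdual{e_0}{\vhat(t)}$ respectively.
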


\begin{proof}
Let $F_1 \in L^1(0,T;H)$ and $F_2 \in L^2(0,T;V')$ be arbitrarily taken so that $F = F_1 + F_2$. We have that
\begin{align}
    \integr\fdual{\Fhat_1(s)}{v(s)} \de s 
    & \le \integr \norma{\Fhat_1(s)}{H} \norma{v(s)}{H} \de s \notag \\ 
    & \le \delta \normaq{v}{L^2(0,t;H)} + \frac{1}{4\delta}\normaq{\Fhat_1}{L^2(0,t;H)} \notag \\ 
    & =  \delta \normaq{v}{L^2(0,t;H)} + 
       \frac{1}{4\delta}\integr\normaql{\int_0^s F_1(r) \de r}{H} \de s  \notag \\
    & \le  \delta \normaq{v}{L^2(0,t;H)} + 
      \frac{1}{4\delta}\integr \left( \int_0^s \norma{F_1(r)}{H} \de r \right)^2 \de s  \notag \\
    & \le \delta \normaq{v}{L^2(0,t;H)} + 
      \frac{1}{4\delta}t\normaq{F_1}{L^1(0,T;H)}. \label{lem for est1}     
\end{align}
Let us observe that for any Banach space $B$ we have
\begin{equation}
  \normaq{\vhat(t)}{B} = \normaql{\integr v(s)\de s}B \le \left( \integr\norma{v(s)}B \de s \right)^2 \le 
  t\normaq{u}{L^2(0,t;B)}, \label{lem for est2}
\end{equation}
therefore, integrating by parts and applying Young inequality, we find that
\begin{align}
  & \integr\fdual{\Fhat_2(s)}{v(s)} \de s \notag \\
  & =   \fdual{\Fhat_2(t)}{\vhat(t)} - \integr\fdual{F_2(s)}{\vhat(s)} \de s \notag \\
  & \le \norma{\Fhat_2(t)}{V'} \norma{\vhat(t)}{V} + \integr\norma{F_2(s)}{V'} \norma{\vhat(s)}{V} \de s \notag \\
  & = \norma{\Fhat_2(t)}{V'}\left(\normaq{\vhat(t)}{H} + \normaq{\nabla\vhat(t)}{H^n}\right)^{1/2} +  
         \integr\norma{F_2(s)}{V'} \left(\normaq{\vhat(s)}{H} + \normaq{\nabla\vhat(s)}{H^n}\right)^{1/2} \de s \notag \\ 
  & \le \delta \left(t\normaq{v}{L^2(0,t;H)}+\normaq{\nabla\vhat(t)}{H^n}\right) + 
       \frac{1}{4\delta}\normaq{\Fhat_2(t)}{V'} \notag \\
  & \phantom{\le\ } +
      \delta\integr\left(s\normaq{v}{L^2(0,s;H)}+\normaq{\nabla\vhat(s)}{H^n}\right) \de s +
        \frac{1}{4\delta} \normaq{F_2}{L^2(0,T;V')} \notag \\
  & \le \delta \left(t\normaq{v}{L^2(0,t;H)}+\normaq{\nabla\vhat(t)}{H^n}\right) + 
       \frac{t}{4\delta}\normaq{F_2}{L^2(0,t;V')} \notag \\   
  & \phantom{\le\ } +
      \delta (t^2/2)\normaq{v}{L^2(0,t;H)} + \delta \integr\normaq{\nabla\vhat(s)}{H^n} \de s +
        \frac{1}{4\delta} \normaq{F_2}{L^2(0,T;V')}, \label{lem for est3}           
\end{align}
thus \eqref{lem for estA} follows from \eqref{lem for est1}, \eqref{lem for est3}, and from the 
elementary inequality $a^2 + b^2 \le (|a| + |b|)^2$, holding for $a, b \in \mathbb{R}$. Finally estimate 
\eqref{lem for estB} is a consequence of \eqref{lem for est2} and of formula
\[
  \integr\fdual{e_0}{v(s)}\de s = \fdual{e_0}{\vhat(t)} \le \norm{e_0}{H} \norm{\vhat(t)}{H}.
\]
\end{proof}

We can now deduce the estimate for the temperature $\theta$.

\begin{Lem}\label{L:theta-estimate}
Under the assumptions of Theorem \ref{convergence thm},
there exists a constant $C_1$ independent of $\eps$, but depending on $T$, $\Omega$, $\alpha$, 
$\psi$, $f$, $u$, 
$\theta_0$, $\chi_0$, such that if $(\thetaeps,\chieps)$ is the only solution of Problem \emph{(\textbf{P$_{\eps}$})}, then, recalling notation \eqref{cappello},
\begin{equation}
  \norma{\thetaeps}{L^2(0,T;H)} + \norma{\thetahat_\eps}{L^\infty(0,T;V)} +
  \eps^{1/2}\norma{\thetaeps}{L^2(0,T;V)} \le C_1.
\end{equation}
\end{Lem}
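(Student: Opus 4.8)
The plan is to derive the energy estimate by testing the time-integrated energy balance equation against $\thetaeps$ itself, exploiting the Baiocchi-type transformation to control the worst term. Concretely, integrate \eqref{pb-eps eq.energ.} in time from $0$ to $t$ to obtain, in $V'$,
\[
  \thetaeps(t) + \chieps(t) + \A\thetahat_\eps(t) = \Fhat_\eps(t) + \thetaoeps + \chioeps,
\]
where $F_\eps := f_\eps$ and we use that $\thetaeps(0)+\chieps(0)=\thetaoeps+\chioeps$ together with $\A\thetahat_\eps(t) = \int_0^t \A\thetaeps(s)\,\de s$. Then I would test this identity with $\thetaeps(t)$ and integrate over $\opint{0,t}$. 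The term $\int_0^t \fdual{\A\thetahat_\eps(s)}{\thetaeps(s)}\,\de s = \int_0^t \int_\Omega \nabla\thetahat_\eps(s)\cdot\nabla\thetaeps(s) = \frac12\normaq{\nabla\thetahat_\eps(t)}{H^n}$ since $\thetaeps = \thetahat_\eps{}'$. The term $\int_0^t(\chieps(s),\thetaeps(s))_H\,\de s$ is controlled using $\eps\chieps' = \psi(\thetaeps+u_\eps,\chieps)$ and the uniform bound $\norma{\chieps}{L^\infty(Q)}\le M+\eta_\eps$ from Lemma \ref{|ph|< 1} (with $\eta_\eps\to 0$ because $\chi_{0\eps}\to\chi_0$ in $L^\infty$), so $\int_0^t(\chieps(s),\thetaeps(s))_H\,\de s \le \delta\normaq{\thetaeps}{L^2(0,t;H)} + C/\delta$ with $C$ independent of $\eps$. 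The right-hand side terms $\int_0^t\fdual{\Fhat_\eps(s)}{\thetaeps(s)}\,\de s$ and $\int_0^t\fdual{\thetaoeps+\chioeps}{\thetaeps(s)}\,\de s$ are exactly what Lemma \ref{L:lem for est} is designed to absorb, yielding bounds in terms of $\normaq{\thetaeps}{L^2(0,t;H)}$, $\normaq{\nabla\thetahat_\eps(t)}{H^n}$, $\int_0^t\normaq{\nabla\thetahat_\eps(s)}{H^n}\,\de s$, plus $\eps$-independent constants (using $f_\eps\to f$ in $L^1(0,T;L^1(\Omega))\subset L^1(0,T;H)$ is \emph{not} automatic — actually $L^1$ convergence of scalar functions does not give $L^2(\Omega)$-valued $L^1$ convergence, so here one uses $f\in BV(0,T;L^1(\Omega))$ and the decomposition norm boundedness of $f_\eps$ via the convergence hypothesis).

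After collecting terms, I would choose $\delta$ small enough (relative to the absolute constant $1$ multiplying $\frac12\normaq{\thetaeps}{L^2(0,t;H)}^{?}$ — note $\frac12\normaq{\thetaeps}{L^2(0,t;H)}$ does not appear on the left directly; rather $\frac12\normaq{\thetahat_\eps'(t)}{\cdot}$ — so care is needed) to move the $\delta\normaq{\thetaeps}{L^2(0,t;H)}$ and $\delta\normaq{\nabla\thetahat_\eps(t)}{H^n}$ contributions to the left. This produces
\[
  c\,\normaq{\thetaeps}{L^2(0,t;H)} + c\,\normaq{\nabla\thetahat_\eps(t)}{H^n}
  \le C + C\int_0^t\normaq{\nabla\thetahat_\eps(s)}{H^n}\,\de s
\]
for $\eps$-independent $c>0$ and $C$, whence Gronwall's lemma gives $\normaq{\nabla\thetahat_\eps(t)}{H^n}\le C'$ uniformly in $t$ and $\eps$, i.e. $\norma{\thetahat_\eps}{L^\infty(0,T;V)}\le C_1$, and then $\norma{\thetaeps}{L^2(0,T;H)}\le C_1$. (One also uses that $\thetahat_\eps(t)\in V$ with the Poincaré-type inequality implicit in the $V$-norm; since $\Gamma_0$ may have zero measure the full $H^1$ norm is used, and $\normaq{\thetahat_\eps(t)}{H}$ is controlled by $t\normaq{\thetaeps}{L^2(0,t;H)}$ via \eqref{lem for est2}.)

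Finally, for the last term $\eps^{1/2}\norma{\thetaeps}{L^2(0,T;V)}$, I would go back to the \emph{un-integrated} equation \eqref{pb-eps eq.energ.} and test it with $\eps\,\thetaeps(t)$, or equivalently test \eqref{pb-eps eq.energ.} with $\thetaeps$ directly and multiply through by $\eps$: the term $\eps\int_0^t\fdual{\A\thetaeps(s)}{\thetaeps(s)}\,\de s = \eps\int_0^t\normaq{\nabla\thetaeps(s)}{H^n}\,\de s$; the term $\eps\int_0^t(\chieps'(s),\thetaeps(s))_H\,\de s = \int_0^t(\psi(\thetaeps+u_\eps,\chieps)(s),\thetaeps(s))_H\,\de s$ is bounded using Lipschitz continuity of $\psi$ and the already-established $L^2(Q)$ bound on $\thetaeps$, and the $\eps\thetaeps'$ term gives $\frac{\eps}{2}\normaq{\thetaeps(t)}{H}-\frac{\eps}{2}\normaq{\thetaoeps}{H}\ge -\frac12\normaq{\thetaoeps}{H}$, which is $\eps$-uniformly bounded. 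The right-hand side $\eps\int_0^t\fdual{f_\eps(s)}{\thetaeps(s)}\,\de s$ is bounded by $\eps^{1/2}$ times known quantities (or absorbed), so rearranging yields $\eps\int_0^T\normaq{\nabla\thetaeps}{H^n}\le C_1^2$, i.e. the claimed bound. The main obstacle is the bookkeeping around the $\chieps$-term and the boundary/regularity issues in testing the $V'$-valued integrated equation with $\thetaeps\in V$ — one must justify $\int_0^t\fdual{\A\thetahat_\eps(s)}{\thetaeps(s)}\,\de s = \frac12\normaq{\nabla\thetahat_\eps(t)}{H^n}$ rigorously (it follows from $\thetahat_\eps\in H^1(0,T;V)$, which holds since $\thetaeps\in L^2(0,T;V)$), and carefully track that all constants arising from the convergence hypotheses \eqref{convhp1}–\eqref{cheps0->ch0} are $\eps$-independent.
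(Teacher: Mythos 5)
Your proposal is correct and follows essentially the same route as the paper: the same two energy tests (the time-integrated equation tested against $\thetaeps$, and the original equation tested against $\eps\thetaeps$), the same reliance on Lemma \ref{L:lem for est}, on the uniform $L^\infty$ bound from Lemma \ref{|ph|< 1}, and on a generalized Gronwall lemma. The only divergence is organizational: you run the two estimates sequentially instead of summing them into one Gronwall inequality as the paper does, which incidentally lets you bound the term $\eps\integr\intom\chieps'\thetaeps$ by Lipschitz continuity and the already-established $L^2(Q)$ bound alone, whereas the paper must invoke the monotonicity \eqref{phi increasing in theta} of $\psi$ in its coupled estimate \eqref{estimate4}; your remark about the gap between the hypothesis $f_\eps\to f$ in $L^1(0,T;L^1(\Omega))$ and the needed uniform bound in $L^1(0,T;H)+L^2(0,T;V')$ is a legitimate observation that the paper's own proof glosses over as well.
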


\begin{proof}
We will tacitly use the convergences \eqref{convhp1}--\eqref{cheps0->ch0}.
Let us fix $t \in \clint{0,T}$. First we integrate the energy balance equation \eqref{pb-eps eq.energ.} with respect to time over $\clint{0,s}$ with $s \in \clint{0,t}$, and test it by $\thetaeps(s)$. After a further integration over $\clint{0,t}$, and recalling \eqref{cappello}, we get
\begin{align}
  & \normaq{\thetaeps}{L^2(0,t;H)} 
  + \integr\intom \chieps(s,x)\thetaeps(s,x) \de x \de s
   + \frac{1}{2}\intom |\nabla\thetahat_\eps(t,x)|^2 \de x \notag \\
  & = \integr\fdual{\theta_{\eps 0} + \chi_{\eps 0} + \fhat_\eps(s)}{\thetaeps(s)}\de s,    
  \label{estimate0}
\end{align}
therefore using Lemma \ref{L:lem for est} we infer that there exists a constant $K_1$
depending on $\norm{\theta_0}{H}$, $\norm{\chi_0}{H}$, $\norm{f}{L^1(0,T;H)+L^2(0,T;V')}$, and $T$, but independent of $\eps$, such that 
\begin{align}
  & \frac{1}{2}\normaq{\thetaeps}{L^2(0,t;H)} 
  + \integr\intom \chieps(s,x)\thetaeps(s,x) \de x \de s
  + \frac{1}{4}\intom |\nabla\thetahat_\eps(t,x)|^2 \de x \notag \\
  & \le  K_1 + K_1 \integr \intom |\nabla\thetahat_\eps(s,x)|^2 \de x \de s. 
  \label{estimate1}
\end{align}
Let us recall that $f_\eps = f_{\eps 1} + f_{\eps 2}$ with $f_{\eps 1} \in L^1(0,T;H)$ and 
$f_{\eps 2} \in L^2(0,T;V')$.
We test by $\eps\thetaeps$ the energy balance equation \eqref{pb-eps eq.energ.} and integrate over 
$\clint{0,t}$. Thanks to \eqref {pb-eps c.i.theta},  we infer that
\begin{align}
& \frac{\eps}{2}\normaq{\thetaeps(t)}{H} + 
    \eps\integr\intom\chieps'(s,x)\thetaeps(s,x) \de x \de s 
  + \eps\integr\intom|\nabla\thetaeps(s,x)|^2 \de x \de s \notag \\
  & =    \frac{\eps}{2}\normaq{\thetaoeps}{H} + 
  \eps\integr\left(f_{\eps 1}(s), \thetaeps(s)\right)_H\de s +
  \eps\integr\fdual{f_{\eps 2}(s)}{\thetaeps(s)}\de s 
  \label{estimate2}
\end{align}
therefore, recalling that $\eps < 1$, several applications of Young and H\"older inequlities yield
\begin{align}
& \frac{\eps}{2}\normaq{\thetaeps(t)}{H} + 
    \eps\integr\intom\chieps'(s,x)\thetaeps(s,x) \de x \de s 
  + \frac{\eps}{2}\integr\intom|\nabla\thetaeps(s,x)|^2 \de x \de s \notag \\
  & \le K_2 + K_2\integr\norma{f_{\eps 1}(s)}{H} \eps^{1/2}\norma{\thetaeps(s)}{H} \de s + 
  K_2\integr\eps\normaq{\thetaeps(s)}{H} \de s
  \label{estimate3}
\end{align}
for some $K_2 > 0$ depending on $\theta_0$, on $f$,
but independent of $\eps$.
Thanks to equation 
\eqref{pb-eps eq.fase} for the phase relaxation and to the monotonicity 
\eqref{phi increasing in theta} of $\psi$ in the first variable, we can write (omitting in some lines the integration variable $(s,x)$):
\begin{align}
  & \eps\integr\intom\chieps'(s,x)\thetaeps(s,x) \de x \de s \notag \\
  & = \integr\intom \psi(\thetaeps+u_\eps,\chieps) \thetaeps \de x \de s \notag \\ 
  & = \integr\intom \psi(\thetaeps+u_\eps,\chieps)(\thetaeps+u_\eps) \de x \de s  
       - \integr\intom \psi(\thetaeps+u_\eps,\chieps)u_\eps \de x \de s\notag \\ 
  & = \integr\intom \big[\psi(\thetaeps+u_\eps,\chieps) -  
         \psi(0,\chieps)\big](\thetaeps +u_\eps) \de x \de s \notag \\ 
  & \phantom{=\ } + \integr\intom \psi(0,\chieps)(\thetaeps+u_\eps) \de x \de s -  
     \integr\intom \psi(\thetaeps+u_\eps,\chieps)u_\eps \de x \de s\notag \\ 
  & \ge  \integr\intom \psi(0,\chieps)(\thetaeps+u_\eps) \de x \de s -  
     \integr\intom \psi(\thetaeps+u_\eps,\chieps)u_\eps \de x \de s. \label{estimate4} 
 \end{align}
On the other hand, recalling  the Lipschitz continuity \eqref{phi lip} of $\psi$, we get that (we still omit the integration variable $(s,x)$):
\begin{align}
 & \integr\intom \psi(0,\chieps)(\thetaeps+u_\eps) \de x \de s - 
     \integr\intom \psi(\thetaeps+u_\eps,\chieps)u_\eps \de x \de s\notag \\
  & = \integr\intom \big[\psi(0,\chieps) - \psi(\thetaeps+u_\eps,\chieps)\big]u_\eps \de x \de s 
      + \integr\intom \psi(0,\chieps)\thetaeps \de x \de s \notag \\
  & = \integr\intom \big[\psi(0,\chieps) - \psi(\thetaeps+u_\eps,\chieps)\big]u_\eps \de x \de s 
  \notag \\
  &  \phantom{=\ }  + \integr\intom \big[\psi(0,\chieps)-\psi(0,0)\big]\thetaeps \de x \de s  
 + \integr\intom \psi(0,0)\thetaeps \de x \de s \notag \\ 
  & \ge - \integr\intom L|\thetaeps+u_\eps||u_\eps| \de x \de s 
           - \integr\intom L|\chieps||\thetaeps| \de x \de s
        - \integr\intom |\psi(0,0)||\thetaeps| \de x \de s. \label{estimate5}
\end{align}
Let us observe that thanks to \eqref{cheps0->ch0} and to Lemma \ref{|ph|< 1}, we have that there exists $M_1 >0$ (depending on $M$) such that
\begin{equation}\label{M1}
  \norma{\chi_\eps}{\infty} \le M_1
\end{equation}
for every $\eps < 1$. Therefore, collecting together \eqref{estimate4}--\eqref{estimate5}, and using the elementary Young inequality, we infer that there exists a constant $K_3 > 0$ depending only on $T$, $|\Omega|$, $L$, $|\psi(0,0)|$, $\norma{u}{L^2(0,T;H)}$, and $M$, such that
\begin{equation}
  \eps\integr\intom\chieps'(s,x)\thetaeps(s,x) \de x \de s 
  \ge -\ K_3 - \frac{1}{8}\normaq{\thetaeps}{L^2(0,t;H)}.
  \label{estimate6}
\end{equation}
Using again the boundedness of $\norma{\chi_\eps}{\infty}$ and the elementary Young inequality we also have that
\begin{equation}
  \integr\intom \chieps(t,x) \theta(t,x) \de x \de t \ge 
  -2M_1^2t|\Omega| - 
  \frac{1}{8}\normaq{\thetaeps}{L^2(0,t;H)}.
  \label{estimate7}
\end{equation}
Therefore adding \eqref{estimate6} and \eqref{estimate7}, and taking into account of \eqref{estimate3} and
\eqref{estimate4}, we find a constant $K$ with the same dependencies of $K_1, K_2, K_3$, but independent of $\eps$, such that
\begin{align}
  & \frac{1}{4}\normaq{\thetaeps}{L^2(0,t;H)} + \frac{1}{4}\intom |\nabla\thetahat_\eps(t,x)|^2 \de x + 
  \frac{\eps}{4}\normaq{\thetaeps(t)}{H} + 
  \frac{\eps}{2}\integr\intom|\nabla\thetaeps(s,x)|^2 \de x \de s \notag \\ 
  & \le K + K 
  \left(\integr \intom |\nabla\thetahat_\eps(s,x)|^2 \de x \de s + 
     \integr\norma{f_{\eps 1}(s)}{H} \eps^{1/2}\norma{\thetaeps(s)}{H} \de s + 
  \integr\eps\normaq{\thetaeps(s)}{H} \de s\right), \notag
\end{align}
which, together with a generalized version of the Gronwall Lemma (cf. \cite[Theorem 2.1]{Bai67}), allows us to conclude.
\end{proof}

Now we establish the estimate for the phase $\chi$.

\begin{Lem}\label{L:chi-estimate}
Under the assumptions of Theorem \ref{convergence thm},
there exists a constant $C_2$ independent of $\eps$, but depending on $T$, $\Omega$,  $\alpha$, $\psi$, $f$, $u$, 
$\theta_0$, $\chi_0$, such that if $(\thetaeps,\chieps)$ is the only solution of Problem \emph{(\textbf{P$_{\eps}$})}, then
\begin{equation}
  \norma{\chieps}{L^\infty(Q)} + \eps\norma{\chieps'}{L^2(Q)}  \le C_2.
\end{equation}
\end{Lem}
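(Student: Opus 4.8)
The plan is to obtain the two bounds separately, each as a short consequence of results already established.

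\textbf{The $L^\infty$-bound.} First I would apply Lemma~\ref{|ph|< 1} with the choice $\eta_\eps := \norma{\chi_{0\eps} - \chi_0}{L^\infty(\Omega)}$ (if this happens to vanish, replace it by any positive number and let it tend to $0$ at the end). With this choice one has $|\chi_{0\eps}(x)| \le |\chi_0(x)| + \eta_\eps$ for a.e.\ $x \in \Omega$; moreover $\chieps \in H^1(0,T;H)$ and $\thetaeps \in L^2(Q)$ by \eqref{pb-eps chi}, \eqref{pb-eps theta}, and equation \eqref{pb-eps eq.fase} is exactly the phase-relaxation identity required by Lemma~\ref{|ph|< 1}. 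Hence that lemma gives $|\chieps(t,x)| \le M + \eta_\eps$ for a.e.\ $(t,x) \in Q$. Since $\chi_{0\eps} \to \chi_0$ in $L^\infty(\Omega)$ by \eqref{cheps0->ch0}, the quantity $\sup_{\eps < 1}\eta_\eps$ is finite, so $\norma{\chieps}{L^\infty(Q)} \le M + \sup_{\eps<1}\eta_\eps =: M_1$ uniformly in $\eps < 1$; this recovers precisely estimate \eqref{M1} already exploited in the proof of Lemma~\ref{L:theta-estimate}.

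\textbf{The bound on $\eps\chieps'$.} Here I would read \eqref{pb-eps eq.fase} pointwise as $\eps\chieps'(t,x) = \psi\big(\thetaeps(t,x) + u_\eps(t,x), \chieps(t,x)\big)$ for a.e.\ $(t,x)\in Q$, so that $\eps\norma{\chieps'}{L^2(Q)} = \norma{\psi(\thetaeps + u_\eps, \chieps)}{L^2(Q)}$. Expanding around the base point $(0,0)$ and using the Lipschitz continuity \eqref{phi lip} of $\psi$, one has $|\psi(\thetaeps + u_\eps, \chieps)| \le |\psi(0,0)| + L\big(|\thetaeps| + |u_\eps| + |\chieps|\big)$ a.e.\ in $Q$, whence, after integrating,
\[
  \eps\norma{\chieps'}{L^2(Q)} \le |\psi(0,0)|\,|Q|^{1/2} + L\big(\norma{\thetaeps}{L^2(Q)} + \norma{u_\eps}{L^2(Q)} + \norma{\chieps}{L^2(Q)}\big).
\]
Each term on the right is bounded uniformly in $\eps$: $\norma{\thetaeps}{L^2(Q)} = \norma{\thetaeps}{L^2(0,T;H)} \le C_1$ by Lemma~\ref{L:theta-estimate}; $\sup_{\eps}\norma{u_\eps}{L^2(Q)} < \infty$ because $u_\eps \to u$ in $L^2(Q)$; and $\norma{\chieps}{L^2(Q)} \le |Q|^{1/2}\norma{\chieps}{L^\infty(Q)} \le |Q|^{1/2} M_1$ by the first part. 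Collecting these estimates and adding the $L^\infty$-bound produces the desired constant $C_2$ with the stated dependencies.

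\textbf{Expected difficulty.} There is essentially no real obstacle here: the statement is a direct repackaging of Lemma~\ref{|ph|< 1} (for the sup bound) together with Lemma~\ref{L:theta-estimate} and the pointwise Lipschitz estimate for $\psi$ (for the derivative bound). The only point requiring mild care is that $\eta_\eps$ be controlled uniformly in $\eps$, which is guaranteed precisely by the hypothesis $\chi_{0\eps}\to\chi_0$ in $L^\infty(\Omega)$; the rest is a one-line computation.
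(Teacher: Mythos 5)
Your proposal is correct and follows essentially the same route as the paper: the $L^\infty$-bound is read off from Lemma~\ref{|ph|< 1} (using \eqref{cheps0->ch0} to control the perturbation of the initial datum, exactly as in \eqref{M1}), and the bound on $\eps\chieps'$ comes from \eqref{pb-eps eq.fase}, the Lipschitz estimate for $\psi$ around $(0,0)$, and Lemma~\ref{L:theta-estimate}. The only difference is cosmetic (you use the triangle inequality in $L^2(Q)$ where the paper squares and uses $(a+b)^2\le 2a^2+2b^2$), and your explicit choice of $\eta_\eps$ makes a step precise that the paper leaves implicit.
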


\begin{proof}
We already know that the sequence $\chi_\eps$ is bounded in $L^\infty(Q)$ by virtue of Lemma \ref{|ph|< 1}.
From equation 
\eqref{pb-eps eq.fase} for the phase relaxation and from the Lipschitz continuity 
\eqref{phi lip} of $\psi$, we get that
\begin{align}
  & \eps^2\integr\intom|\chieps'(s,x)|^2 \de x \de s = 
  \integr\intom|\psi(\thetaeps(s,x) + u_\eps(s,x),\chieps(s,x))|^2 \de x \de s\notag \\ 
  & \le   2\integr\intom|\psi(\thetaeps(s,x) + u_\eps(s,x),\chieps(s,x)) - \psi(0,0)|^2 \de x \de s 
     +  2\integr\intom|\psi(0,0)|^2 \de x \de s\notag \\
  & \le 2L^2\integr\intom(|\thetaeps(s,x) + u_\eps(s,x)|^2+|\chieps(s,x)|^2) \de x \de s + 
         2T|\Omega||\psi(0,0)|^2 \notag \\ 
  & \le 
  4L^2\normaq{\theta}{L^2(0,t;H)} + 4L^2\normaq{u}{L^2(Q)} + 2L^2t|\Omega|M_1^2 + 2T\Omega|\psi(0,0)|^2, \notag 
\end{align}
where $M_1$ is the constant found in \eqref{M1} thanks to \eqref{cheps0->ch0} and to Lemma 
\ref{|ph|< 1}. We conclude by invoking Lemma \ref{L:theta-estimate}.
\end{proof}

We are now ready to prove the main result of this paper.  

\begin{proof}[Proof of Theorem \ref{convergence thm}]
From Lemma \ref{L:theta-estimate} and Lemma \ref{L:chi-estimate} we deduce that there exist two functions 
\begin{equation}
  \theta \in L^2(Q), \qquad\chi \in L^\infty(Q)
\end{equation}
such that, at least for a subsequence which we do not relabel,
\begin{alignat}{3}
  &\thetaeps\ {\rightharpoonup}\ \theta & \qquad & \textrm{in }L^2(Q), \label{thetaeps->} \\
  &\thetahat_\eps\stackrel{*}{\rightharpoonup}\thetahat
  & \qquad & \textrm{in }L^\infty(0,T;V)\cap H^1(0,T;H),\\
  &\chieps\stackrel{*}\rightharpoonup\chi & \qquad& \textrm{in }L^\infty(Q). \label{chieps->} 
\end{alignat}
An integration in time of the energy balance equation \eqref{pb-eps eq.energ.} yields
\begin{equation}
  \thetaeps + \chieps + \A \thetahat_\eps = \theta_{0\eps} + \chi_{0\eps} + \fhat_\eps,
  \qquad \text{in $L^2(0,T;V')$}
\end{equation}
therefore taking the limit as $\eps \to 0$ along the subsequence established above we get
\begin{equation}
  \theta + \chi + \A \thetahat = \theta_{0} + \chi_{0} + \fhat
  \qquad \text{in $L^2(0,T;V')$}
\end{equation}
which turns out to be equivalent to \eqref{P pb 2}.
From the Lipschitz continuity \eqref{phi lip} of $\psi$ we have that
\begin{align}
  & \int_Q |\eps\chieps'(t,x) - \psi(\theta(t,x) + u(t,x),\chi(t,x))||v(t,x)| \de x \de t \notag \\
  & = \int_Q |\psi(\thetaeps(t,x) + u_\eps(t,x),\chieps(t,x)) - \psi(\theta(t,x) + u(t,x),\chi(t,x))||v(t,x)| \de x \de t \notag \\
  & \le 
    L\int_Q (|\thetaeps(t,x)-\theta(t,x)| + |u_\eps(t,x)-u(t,x)| + |\chi(t,x)-\chieps(t,x)|)|v(t,x)| \de x \de t
\end{align}
for every $v \in L^2(Q)$, therefore if $\xi \in L^2(Q)$ is defined by 
\begin{equation}
  \xi(t,x) := \psi(\theta(t,x) + u(t,x),\chi(t,x)), \qquad (t,x) \in Q, \label{xi}.  
\end{equation}
we have that
\begin{equation}
  \eps\chieps' \ {\rightharpoonup}\ \xi \qquad \text{in $L^2(Q)$} \label{epschi'->xi}
\end{equation}
(observe that $\thetaeps \to \theta$ and $\chieps \to \chi$ a.e. in $Q$ by virtue of \cite[(3.9)-(3.10)]{Vis01}). 
On the other hand from \eqref{chieps->} we have that $\chieps' \to \chi'$ in $Q$ in the sense of distributions, therefore 
$\eps\chieps' \to 0$ in $Q$ in the sense of distributions and we infer that
\begin{equation}
\eps\chieps' \ \rightharpoonup \ 0   
       \quad\textrm{in }L^2(Q). \label{epschi->0}
\end{equation}
Thus from \eqref{xi}, \eqref{epschi'->xi} and \eqref{epschi->0} we infer that
\begin{equation}
  \psi(\theta(t,x)+u(t,x),\chi(t,x))  = 0 \qquad \text{for a.e. $(t,x) \in Q$},
\end{equation}
so that by \eqref{psi(r,s)=0} we get that
\begin{equation}
  \chi(t,x) \in \alpha(\theta(t,x) + u(t,x)) \qquad \text{for a.e. $(t,x) \in Q$} 
\end{equation}
and also \eqref{P pb 3} is proved.
It remains to prove uniqueness, which also allows us to deduce that the whole sequences 
$(\theta_\eps)$ and $(\chi_\eps)$ converge. Let 
$(\theta_i,\chi_i),\ i=1,2,$ be two solutions, and set 
\begin{equation}
  \thetadiff:=\theta_1-\theta_2, \qquad \chidiff:=\chi_1-\chi_2.
\end{equation} 
Taking the difference of the equations \eqref{P pb 2}  written for 
$(\theta_1,\chi_1)$ and $(\theta_2,\chi_2),$ we find
\begin{eqnarray}
&\thetadiffhat\in L^{\infty}(0,T;V)\cap H^1(0,T;H),\quad  \\  
 & \chidiff\in L^\infty(0,T;H),\\
&\thetadiff+\chidiff+A\thetadiffhat=0\quad 
  \textrm{in}\ V',\ \ {\rm in}\ \opint{0,T}.\label{int-eq in H}
\end{eqnarray}
By a comparison in the last equation, we see that 
$A\thetadiffhat\in L^2(0,T;H),$ therefore 
multiplying \eqref{int-eq in H} by $\thetadiff$ and integrating over 
$\Omega\times(0,t)$,
we get 
\begin{equation}
\normaq{\thetadiff}{L^2(0,t;H)}+
\intdoppio\chidiff(s,x)\thetadiff(s,x) \de x \de s +  
\frac{1}{2}\intom|\nabla\thetadiffhat(t,x)| \de x=0.  \label{last eq}
\end{equation}
Therefore, since $\chidiff\thetadiff\geq0$ a.e. in $Q$ by the monotonicity of $\alpha$ and 
\eqref{P pb 3}, from \eqref{last eq} we infer that $\thetadiff=0$ a.e. in $Q$ and, by a comparison in 
\eqref{int-eq in H}, that $\chidiff=0$ a.e. in $Q$, and the uniqueness of Problem {\bf (P)} is proved.
This uniqueness property, together with the fact that the formulation \eqref{wStef th}--\eqref{wStef pb 3} is stronger than the formulation of Problem {\bf (P)}, let us infer that $(\theta,\chi)$ is indeed the solution of 
\eqref{wStef th}--\eqref{wStef pb 3}.
\end{proof}


\vspace{1ex}

\section*{Acknowledgment}

I would like to express my gratitude to Pierluigi Colli for introducing me, more than twenty years ago, to research in mathematics and in PDE's by  proposing and helping me with my first work \cite{Rec02}.

I am also grateful to an anonymous referee who read very carefully the manuscript and pointed out a number of inaccuracies.

\vspace{1ex}


\end{document}